\newcommand{\Og}{\Omega}
\newcommand{\Dt}{\Delta}
\newcommand{\be}{\begin{equation}}
\newcommand{\ee}{\end{equation}}
\newcommand{\ba}{\begin{array}}
\newcommand{\ea}{\end{array}}
\newcommand{\bea}{\begin{eqnarray}}
\newcommand{\eea}{\end{eqnarray}}
\newcommand{\beas}{\begin{eqnarray*}}
\newcommand{\eeas}{\end{eqnarray*}}
\crefname{hypothesis}{Hypothesis}{Hypotheses}
\title{The Ramshaw-Mesina Hybrid Algorithm applied to the Navier Stokes Equations\thanks{
The research was partially supported by NSF grant DMS-2110379 and TUBITAK grant BIDEB2219.}}
\author{Aytekin \c{C}ibik\thanks{Department of Mathematics, Gazi University, Ankara, 06550, T\"{u}rkiye 
  (abayram@gazi.edu.tr ).}
  \and  Farjana Siddiqua\thanks{Department of Mathematics, University of Pittsburgh, Pittsburgh, PA-15260
  (fas41@pitt.edu ).}
\and William Layton\thanks{Department of Mathematics, University of Pittsburgh, Pittsburgh, PA-15260 
  (wjl@pitt.edu).}
}
\begin{document}
\nolinenumbers
\maketitle
\begin{abstract}
In 1991, Ramshaw and Mesina proposed a novel synthesis of penalty methods and artificial compression methods. When the two were balanced they found the combination was 3-4 orders more accurate than either alone. This report begins the study of their interesting method applied to the Navier-Stokes equations. We perform stability analysis, semi-discrete error analysis, and tests of the algorithm. Although most of the results for implicit time discretizations of our numerical tests comply with theirs for explicit time discretizations, the behavior in damping pressure oscillations and violations of incompressibility are different from their findings and our heuristic analysis.
\end{abstract}
\begin{keywords}
  penalty, artificial compression, Navier-Stokes equations
\end{keywords}
\begin{AMS}
   65M12, 65M15, 65M60
\end{AMS}
\section{Introduction}
In 1991, Ramshaw and Mesina proposed and tested a numerical regularization for approximating solutions of the incompressible Navier-Stokes equations by combining pressure penalty (PP) and artificial compression (AC) methods in a novel way \cite{ramshaw1991hybrid}. The numerical results of Ramshaw and Mesina indicated that their method is more accurate compared to PP and AC methods in terms of computational time and divergence errors,
\begin{center}
... \emph{3-4 orders of magnitude smaller}..., \cite[page 170, point 3]{ramshaw1991hybrid}.
\end{center}
In this study, we investigate the hybrid scheme of Ramshaw and Mesina with finite element spatial discretization and implicit time discretization. The unconditional stability and convergence results are verified by our numerical tests.
Our tests did not show the above advantage in damping pressure oscillations and violations of incompressibility, possibly due to differences between implicit and explicit time discretizations.\par
Consider a regular and bounded flow domain $\Omega\subset  \mathbb{R}^d \ (d=2,\ 3)$. The Navier-Stokes equations (NSE) with no-slip boundary conditions are:
\begin{equation}\label{nse0}
\begin{aligned}
& u_t+ u\cdot\grad u-\nu\Delta u+\grad p=f(x),\ \text{and} \ \div u=0,\ \text{in}\ \Omega\cross (0,T],
\\& u=0,\ \text{on}\ \partial \Omega \cross(0,T],\ \text{and}\ u(x,0)=u_0,\ \text{in}\ \Omega.
\end{aligned}
\end{equation}
Here $u$ is the velocity, $\nu$ is the kinematic viscosity, $p$ is the pressure, and $f$ is the prescribed body force. One of the major challenges for solving the NSE numerically is the coupling of velocity and pressure which increases the execution times of codes and raises the computational memory needs. PP and AC methods are known to perform well in terms of eliminating this coupling \cite{guzel22}.
The idea of Ramshaw and Mesina \cite{ramshaw1991hybrid} is to convert the penalty relaxation of incompressibility from $2\beta\div u+p=0$ to $\frac{d}{dt}(2\beta \div u+p)=0$ before combining with the  artificial compression relaxation $p_t+\alpha^2\div u=0$. This gives rise to a hybrid model which is then discretized in space and time. Their motivating intuition was that the penalty method damps rapidly high frequency components of incompressibility violations while the artificial compression method reduces those violations by moving them to higher frequencies as non-physical acoustics. This decouples the velocity and pressure, making the resulting system significantly more straightforward to solve. They considered an explicit finite difference method for discretization in space and time.\par
The algorithm we consider to approximate the NSE \eqref{nse0} solution $(u,p)$ is the FEM discretization of the following continuum model. Skew-symmetrize the non-linearity by adding $\frac{1}{2}(\div w)w$, select large parameters $\alpha^{2}$\ and $\beta$:
\begin{equation}\label{nse}
\begin{aligned}
& w_t+ w\cdot\grad w+\frac{1}{2}(\div w)w-\nu\Delta w+\grad \lambda=f(x),\ \text{in}\ \Omega\cross (0,T],
\\& \lambda_t+2\beta \div w_t+\alpha^2\div w=0,\ \text{in}\ \Omega\cross (0,T],
\\& w=0,\ \text{on}\ \partial \Omega \cross(0,T],\ \text{and}\ w(x,0)=w_0,\ \text{in}\ \Omega.
\end{aligned}
\end{equation}

\subsection{Related Work}
Ramshaw and Mesina \cite{ramshaw1991hybrid} found the hybrid combination very effective in damping $\|\div u\|$ in explicit time discretizations. It also has the property that at steady state $(p_t=0, u_t =0)$ exact incompressibility holds. Thus, several (Ramshaw and Mousseau \cite{ramshaw1990accelerated,ramshaw1991damped}, McHugh and Ramshaw \cite{mchugh1995damped}, \c{C}{\i}b{\i}k and Layton \cite{ccibik2024convergence}) studied it as a dynamic iteration to solve the steady state flow equations. Formerly, Kobel'kov \cite{kobel2002symmetric} and Brooks and Hughes (in a short remark in a long paper) \cite{brooks1982streamline} have tried syntheses of penalty and artificial compression models without significant gain. Dukowicz \cite{duk93} found fully explicit treatment of both velocity and pressure was less efficient that fully implicit treatment with CG solvers. Generally, there is a close connection between the analysis of artificial compression methods and penalty methods, developed in the book of Prohl \cite{prohl1997projection}. There are a huge number of papers on both approaches. We note some early works in \cite{heinrich1995penalty,shen1995error}. Interesting developments continue in e.g. \cite{olshanskii2019penalty,linke2017connection,layton2020doubly,fang2023penalty}, among many recent papers.\par
To the best of our knowledge, this is the first study concerning the hybrid scheme of Ramshaw and Mesina in terms of the finite element method. The Ramshaw-Mesina  idea allows an implicit velocity update with an explicit pressure update (preserving unconditional stability, \Cref{thm:be}, \Cref{sec:4}), an option not investigated previously. 
We also consider the damping effect of the hybrid scheme and test and compare its performance with PP and AC methods. \par
 The paper is organized as follows: The algorithm is described in \Cref{sec:3}, the stability analysis and the error analysis are given in \Cref{sec:4} and \Cref{sec:5}, respectively. Numerical tests are presented in \Cref{sec:6}.

\section{Notation and Preliminaries}
In this section, we introduce some of the notations and results used in this paper. We denote by $\|\cdot\|$ and $(\cdot,\cdot)$ the $L^2(\Omega)$ norm and inner product, respectively. We denote the $L^p(\Omega)$ norm by $\|\cdot\|_{L^p}$.
The solution spaces $X$ for the velocity and $Q$ for the pressure are defined as:
\begin{equation*}
\begin{aligned}
    &X:=(H_0^1(\Omega))^d=\{ v\in (L^2(\Omega))^d: \grad  v\in (L^2(\Omega))^{d\cross d}\ \text{and}\  v=0\ \text{on}\ \partial\Omega\},\\
    &Q:=L^2_0(\Omega)=\{q\in L^2(\Omega): \int_\Omega q\ d x=0\}.
\end{aligned}
\end{equation*}
We denote Bochner Space \cite{adams2003sobolev} norm by $\|v\|_{L^p(0,T;X)}=\Bigg(\int_0^T\|v(\cdot,t)\|_{X}^p dt\Bigg)^{\frac{1}{p}}$, $p\in [1,\infty)$.
The space $H^{-1}(\Omega)$ denotes the dual space of bounded linear functionals defined on $H_0^1(\Omega)=\{ v\in H^1(\Omega):  v=0\ \text{on}\  \partial\Omega\}$ and this space is equipped with the norm:
\begin{equation*}
    \|f\|_{-1}=\sup_{0\neq  v\in X}\frac{(f, v)}{\|\grad  v\|}.
\end{equation*}

The finite element method for this problem involves picking finite element spaces \cite{cfdbook} $X^h\subset X$ and $Q^h\subset Q$. We assume that $(X^h, Q^h)$ are conforming and satisfy the following approximation properties \cref{prop} and discrete inf-sup condition \cref{infsup}: For $u\in (H^{m+1}(\Omega))^d\ \cap\ H_0^1(\Omega)$ and $p\in H^{m}(\Omega)$,
\begin{equation}\label{prop}
\begin{aligned}
\inf_{v^h\in X^h}\{\|u-v^h\|+h\|\grad(u-v^h)\|\}&\leq Ch^{m+1}|u|_{m+1},
\\\inf_{q^h\in Q^h}\|p-q^h\|&\leq Ch^{m}|p|_{m},
\end{aligned}
\end{equation}
\begin{equation}\label{infsup}
\inf_{q^h\in Q^h}\sup_{ v^h\in X^h}\frac{(q^h,\grad\cdot  v^h)}{\|q^h\|\|\grad  v^h\|}\geq \gamma^h>0,
\end{equation}
where $\gamma^h$ is bounded away from zero uniformly in $h$.
\begin{definition}\label{def:l2proj}
The projection operators ${\Pi}_{X}: L^2(\Omega)\rightarrow X^h$ and ${\Pi}_{Q}: L_0^2(\Omega)\rightarrow Q^h$, satisfy
  \begin{align}
      (u-{\Pi}_{X}(u),v^h)&=0,\ \forall v^h\in X^h,\label{l2proju}
      \\ (r-{\Pi}_{Q}(r),q^h)&=0,\ \forall q^h\in Q^h.\label{l2proj}
  \end{align}
The Stokes projection \cite{labovsky2009stabilized} operator $\Pi_S: (X,Q)\rightarrow(X^h,Q^h)$, $\Pi_S(u,p)=(\widetilde{u},\widetilde{p})$, satisfies
\begin{equation}\label{stokes1}
  \begin{aligned}
      \nu(\grad(u-\widetilde{u}),\grad v^h)-(p-\widetilde{p},\div v^h)&=0,\ \forall v^h\in X^h,
      \\ (\div(u-\widetilde{u}),q^h)&=0,\ \forall q^h\in Q^h.
  \end{aligned}
  \end{equation}
\end{definition}
\par
The following estimate in $H^{-1}(\Omega)$, provable for standard finite element spaces under mild assumptions \cite[p.160]{cfdbook}, is assumed,
\begin{equation}\label{proj}
   \|u-{\Pi}_{X}(u)\|_{-1} \leq C h\|u-{\Pi}_{X}(u)\|.
\end{equation}
\begin{definition}\label{trilinear0}
We define the skew symmetrized trilinear form $b^*:X\times{X}\times{X}\rightarrow \mathbb{R}$ as follows: 
\begin{align*}
b^*( u, v, w):=(u\cdot\grad v,w)+\frac{1}{2}((\div u)v,w).
\end{align*}
\end{definition}
\begin{lemma}\label{trilinear_ineq}
(see \cite[p.11, p.123, p.155]{cfdbook}). There exists $C_1$ and $C_2$ such that for any $ u,\  v,\  w\in X$, the skew-symmetric tri-linear form, $b^{*}(u,v,w)$ satisfies
\begin{equation*}
\begin{aligned}
    & \left|b^{*}(u,v,w)\right|\leq C_1(\Omega)\|\grad  u\|\|\grad  v\|\|\grad  w\|,\\&
    \left|b^{*}(u,v,w)\right|\leq C_2(\Omega)\| u\|^{1/2}\|\grad  u\|^{1/2}\|\grad  v\|\|\grad  w\|.
    \end{aligned}
\end{equation*}
\end{lemma}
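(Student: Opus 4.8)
The plan is to reduce both bounds to standard Sobolev embeddings and interpolation (Ladyzhenskaya/Gagliardo--Nirenberg) inequalities, the second after rewriting $b^{*}$ in a more convenient form. First I would record the tools: the Poincar\'e inequality $\|v\|\le C\|\grad v\|$ on $X=(H_0^1(\Omega))^d$, the embedding $H_0^1(\Omega)\hookrightarrow L^4(\Omega)$ valid for $d\le 4$ (so $\|v\|_{L^4}\le C\|\grad v\|$), and the interpolation inequalities $\|v\|_{L^4}\le C\|v\|^{1/2}\|\grad v\|^{1/2}$ in $d=2$ together with, in $d=3$, $\|v\|_{L^3}\le C\|v\|^{1/2}\|\grad v\|^{1/2}$ and $\|v\|_{L^6}\le C\|\grad v\|$.

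For the first estimate I would bound the two terms of $b^{*}(u,v,w)=(u\cdot\grad v,w)+\tfrac12((\div u)v,w)$ directly. By H\"older with exponents $(4,2,4)$ and $(2,4,4)$,
\begin{equation*}
|(u\cdot\grad v,w)|\le \|u\|_{L^4}\|\grad v\|\,\|w\|_{L^4},\qquad |((\div u)v,w)|\le \|\div u\|\,\|v\|_{L^4}\|w\|_{L^4}.
\end{equation*}
Using $\|\div u\|\le \sqrt d\,\|\grad u\|$ and the $L^4$ embedding for $u,v,w$ gives the first inequality, with $C_1(\Omega)$ absorbing the embedding constants.

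For the second estimate the obstacle is that the divergence term carries a full derivative on $u$, whereas the claimed bound only allows $u$ through $\|u\|^{1/2}\|\grad u\|^{1/2}$. The key step is to integrate by parts to move the derivative off $u$: since $u=0$ on $\partial\Omega$, the boundary term vanishes and
\begin{equation*}
\tfrac12((\div u)v,w)=-\tfrac12\big[(u\cdot\grad v,w)+(u\cdot\grad w,v)\big],
\end{equation*}
so that $b^{*}(u,v,w)=\tfrac12\big[(u\cdot\grad v,w)-(u\cdot\grad w,v)\big]$, the standard skew form in which no derivative falls on $u$. I would then apply H\"older with dimension-adapted exponents: in $d=2$, $(L^4,L^2,L^4)$ with Ladyzhenskaya on the $u$ factor and the plain $L^4$ embedding on the undifferentiated factor; in $d=3$, $(L^3,L^2,L^6)$ with $\|u\|_{L^3}\le C\|u\|^{1/2}\|\grad u\|^{1/2}$ and $\|\cdot\|_{L^6}\le C\|\grad(\cdot)\|$. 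Both terms then produce $\|u\|^{1/2}\|\grad u\|^{1/2}\|\grad v\|\|\grad w\|$, yielding the second inequality with constant $C_2(\Omega)$.

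The main obstacle I anticipate is precisely this transfer of the derivative together with the verification that the resulting H\"older exponents are admissible in both $d=2$ and $d=3$; once the skew form is in hand the estimates are routine. A secondary point to check is that all hidden constants depend only on $\Omega$ (through Poincar\'e, the embedding constants, and the Gagliardo--Nirenberg constant), which is what the statement asserts.
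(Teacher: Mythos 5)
Your argument is correct and is the standard one: the paper itself gives no proof, only a citation to the finite element textbook, and the bounds you derive (H\"older plus the $H_0^1\hookrightarrow L^4$ embedding for the first estimate; integration by parts to the antisymmetric form $\tfrac12[(u\cdot\grad v,w)-(u\cdot\grad w,v)]$ followed by Ladyzhenskaya/Gagliardo--Nirenberg interpolation in $d=2,3$ for the second) are exactly the ones used in that reference. No gaps; the exponent bookkeeping in both dimensions checks out, and the constants depend only on $\Omega$ as claimed.
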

\begin{proposition}\label{errstokes}(Error Estimation for the Stokes Projection) Suppose the discrete inf-sup condition \eqref{infsup} holds. Let $C_3$ be a constant independent of $h$ and $\nu$, and $C_4=C(\nu,\Omega)$. 
The error in the Stokes Projection \eqref{stokes1} satisfies
\begin{align}
    \| p-\widetilde{p}\|&\leq \frac{\nu}{\gamma^h}\|\grad(u-\widetilde{u})\| \label{eq1},
    \\
    \nu\|\grad(u-\widetilde{u})\|^2&\leq C_3\bigg[\nu\inf_{v^h\in X^h}\|\grad (u-v^h)\|^2+\nu^{-1} \inf_{q^h\in Q^h}\| p-q^h\|^2\bigg]\label{eq2}.
    \end{align}
    Furthermore, let $\Omega$ be such that the Stokes problem is $H^2$ regular. In that case, the $L^2$ error in the Stokes Projection \eqref{stokes1} satisfies
    \begin{align}
     \|u-\widetilde{u}\|&\leq C_4 h\bigg(\inf_{v^h\in X^h}\|\grad (u-v^h)\|+\inf_{q^h\in Q^h}\| p-q^h\|\bigg)\label{eq3}.
    \end{align}
\end{proposition}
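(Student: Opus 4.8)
The three estimates can be established in sequence, each feeding into the next. Write $e_u := u-\widetilde u$ and $e_p := p-\widetilde p$, and recall from \eqref{stokes1} the two identities $\nu(\grad e_u,\grad v^h)=(e_p,\div v^h)$ for all $v^h\in X^h$ and $(\div e_u,q^h)=0$ for all $q^h\in Q^h$; the first converts velocity-gradient residuals into pressure residuals, and the second supplies the discrete divergence-orthogonality I use repeatedly. For \eqref{eq1}, the plan is to control the pressure error through the discrete inf-sup condition \eqref{infsup}. Applying \eqref{infsup} to $\widetilde p-q^h\in Q^h$ produces, for each $q^h\in Q^h$, a test function $v^h\in X^h$ with $\gamma^h\|\widetilde p-q^h\|\,\|\grad v^h\|\le(\widetilde p-q^h,\div v^h)$. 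Splitting $\widetilde p-q^h=(\widetilde p-p)+(p-q^h)$, rewriting $(\widetilde p-p,\div v^h)=-\nu(\grad e_u,\grad v^h)$ via the first Stokes identity, and applying Cauchy--Schwarz with $\|\div v^h\|\le\sqrt{d}\,\|\grad v^h\|$ gives $\gamma^h\|\widetilde p-q^h\|\le \nu\|\grad e_u\|+C\|p-q^h\|$; the triangle inequality together with the best pressure approximation then yields \eqref{eq1}.

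For \eqref{eq2}, I would run a C\'ea-type energy argument. Insert an arbitrary $v^h\in X^h$ to write $\nu\|\grad e_u\|^2=\nu(\grad e_u,\grad(u-v^h))+\nu(\grad e_u,\grad(v^h-\widetilde u))$, and treat the second term with the first Stokes identity, since $v^h-\widetilde u\in X^h$, turning it into $(e_p,\div(v^h-\widetilde u))$. Inserting an arbitrary $q^h\in Q^h$ and using the second Stokes identity to cancel $(q^h-\widetilde p,\div e_u)=0$ reduces everything to the approximation quantities $\|\grad(u-v^h)\|$ and $\|p-q^h\|$, the leftover factor $\|\widetilde p - q^h\|$ being controlled through \eqref{eq1}. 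The final step is Young's inequality, weighted by $\nu$ so as to absorb $\nu\|\grad e_u\|^2$ into the left side and to produce the characteristic $\nu$ and $\nu^{-1}$ scalings; taking the infimum over $v^h$ and $q^h$ gives \eqref{eq2}.

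The main work, and the main obstacle, is \eqref{eq3}, which I would obtain by an Aubin--Nitsche duality argument. Introduce the dual Stokes problem $-\nu\Delta\Phi+\grad\xi=e_u$, $\div\Phi=0$ in $\Omega$, $\Phi=0$ on $\partial\Omega$, whose $H^2$ regularity gives $\|\Phi\|_{2}+\|\xi\|_{1}\le C(\nu,\Omega)\|e_u\|$. Testing the weak dual problem against $e_u$ yields $\|e_u\|^2=\nu(\grad\Phi,\grad e_u)-(\xi,\div e_u)$, and I would bound each term by subtracting finite element approximants $\Phi^h\in X^h$, $\xi^h\in Q^h$ supplied by \eqref{prop}. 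For the pressure term, the second Stokes identity gives $(\div e_u,\xi^h)=0$, so $(\xi,\div e_u)=(\xi-\xi^h,\div e_u)\le\|\xi-\xi^h\|\,\|\div e_u\|\le Ch|\xi|_1\|\grad e_u\|$. For the viscous term, the first Stokes identity gives $\nu(\grad\Phi,\grad e_u)=\nu(\grad(\Phi-\Phi^h),\grad e_u)+(e_p,\div\Phi^h)$, and since $\div\Phi=0$ I would rewrite $\div\Phi^h=\div(\Phi^h-\Phi)$ so that both pieces carry the $O(h)$ gain from \eqref{prop}. Collecting terms produces $\|e_u\|^2\le Ch\|e_u\|\big(\nu\|\grad e_u\|+\|e_p\|\big)$; dividing by $\|e_u\|$ and substituting the already-proved bounds \eqref{eq1} and \eqref{eq2} for $\|\grad e_u\|$ and $\|e_p\|$ converts the right-hand side into $h$ times the best-approximation errors, which is \eqref{eq3}. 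The delicate points are extracting the extra factor of $h$ from the dual $H^2$ regularity (exactly what forces that hypothesis) and routing the pressure contribution $(e_p,\div\Phi^h)$ through the divergence-free property of $\Phi$ so that no power of $h$ is lost.
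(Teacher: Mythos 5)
The weak point is \eqref{eq1}. Carried out honestly, your inf-sup argument applied to $\widetilde p-q^h\in Q^h$ gives, for every $q^h\in Q^h$,
\begin{equation*}
\gamma^h\|\widetilde p-q^h\|\le \nu\|\grad(u-\widetilde u)\|+\sqrt{d}\,\|p-q^h\|,
\end{equation*}
and the triangle inequality then produces
\begin{equation*}
\|p-\widetilde p\|\le \frac{\nu}{\gamma^h}\|\grad(u-\widetilde u)\|+\Big(1+\frac{\sqrt{d}}{\gamma^h}\Big)\inf_{q^h\in Q^h}\|p-q^h\|.
\end{equation*}
The best-approximation term does not go away, so your closing sentence (``the triangle inequality together with the best pressure approximation then yields \eqref{eq1}'') drops a term rather than absorbing it. No refinement of the argument can remove it under the hypotheses as stated: take $u=0$ and $0\ne p\in L^2_0(\Omega)$ orthogonal to the finite-dimensional space $\div X^h$; then $(\widetilde u,\widetilde p)=(0,0)$ is the unique solution of \eqref{stokes1}, the right-hand side of \eqref{eq1} vanishes, and the left-hand side equals $\|p\|>0$. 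What your argument actually proves is the standard two-term pressure estimate displayed above, which is of optimal order and is all that is needed anywhere downstream; but it is not \eqref{eq1} as literally stated, and the claim that it is constitutes a genuine gap.

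The rest is in good shape. The C\'ea-type argument for \eqref{eq2} is correct, provided you make explicit that in the term $(p-\widetilde p,\div(u-\widetilde u))$ the discrete divergence-orthogonality only cancels the $Q^h$-part, i.e.\ you must split $p-\widetilde p=(p-q^h)+(q^h-\widetilde p)$ before invoking the second identity of \eqref{stokes1} --- which is what your ``insert an arbitrary $q^h$'' step does --- and the two-term version of the pressure bound is entirely sufficient to close the absorption. The duality argument for \eqref{eq3} is the standard Aubin--Nitsche lift and is correct, including the key step of rewriting $\div\Phi^h=\div(\Phi^h-\Phi)$ to retain the factor of $h$ in the pressure contribution. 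For comparison: the paper gives no argument of its own, delegating \eqref{eq1}, \eqref{eq2}, and \eqref{eq3} to three external references, so your write-up is a self-contained alternative; the defect it exposes in \eqref{eq1} is inherited from the statement itself rather than introduced by your method, and is harmless for the error analysis of \Cref{sec:5} since the extra term is $O(h^m)$.
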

\begin{proof}
For the proof of \eqref{eq1}, see \cite[Proposition $4.2$]{kean2023doubly}. Inequality \eqref{eq2} is proved in \cite[Proposition 2.2]{labovsky2009stabilized}. Using the Aubin-Nitsche lift, one can obtain \eqref{eq3}, see \cite[p.373]{baker1982higher}.
\end{proof}
\begin{remark}
To have an estimator for $\|\grad(u-\widetilde{u})_t\|$, we take the partial derivative of \eqref{stokes1} with respect to time $t$,
\begin{equation}\label{stokest}
  \begin{aligned}
      \nu(\grad(u-\widetilde{u})_t,\grad v^h)-((p-\widetilde{p})_t,\div v^h)&=0,\ \forall v^h\in X^h,
      \\ (\div(u-\widetilde{u})_t,q^h)&=0,\ \forall q^h\in Q^h.
  \end{aligned}
  \end{equation}
If the finite element space does not evolve with time (e.g. by mesh movement), then $(\widetilde{u})_t=\widetilde{(v_t)}$ and $(\widetilde{p})_t=\widetilde{(p_t)}$ and we have,
\begin{equation}\label{errst1}
\begin{aligned}
   & \| (p-\widetilde{p})_t\|\leq \frac{\nu}{\gamma^h}\|\grad(u-\widetilde{u})_t\|,
    \\&
    \nu\|\grad(u-\widetilde{u})_t\|^2\leq C\bigg[\nu\inf_{v^h\in X^h}\|\grad (u-v^h)_t\|^2+\nu^{-1} \inf_{q^h\in Q^h}\| (p-q^h)_t\|^2\bigg].
    \end{aligned}
\end{equation}
We assume \eqref{errst1} holds in our error analysis of \Cref{sec:5}.
\end{remark}

\section{Formulation}\label{sec:3}
We can rewrite the weak formulation of NSE \eqref{nse0} as follows: Find $( u,p)\in (X,Q)$ such that
\begin{align}
\label{nse_re_v1}
    ( u_t, v)  +b^{*}(u,u,v)+\nu(\grad u,\grad  v)
    -( p,\grad\cdot  v)&=(f, v),\quad\forall\   v\in X,\\
    \label{nse_re_v2}
    (p_t+2\beta \div u_t+\alpha^{2}\div u,q)&=(p_t,q), \quad\forall\ q\in Q.
\end{align}
We write the weak formulation of (\ref{nse})
as follows:
Find $( w,\lambda)\in (X,Q)$ such that
\begin{align}
\label{nse_v1}
    ( w_t, v)  +b^{*}(w,w,v)+\nu(\grad w,\grad  v)
    -( \lambda,\grad\cdot  v)&=(f, v),\quad\forall\   v\in X,\\
    \label{nse_v2}
    (\lambda_t+2\beta \div w_t+\alpha^{2}\div w,q)&=0, \quad\forall\ q\in Q.
\end{align}
Next, we consider the semi-discrete approximation of (\ref{nse}).
Suppose $ w^h( x,0)$ is approximation of $ w( x,0)$. The approximate velocity and pressure are maps
\begin{equation*}
     w^h: [0,T]\to X^h, \ \lambda^h:(0,T]\to Q^h 
\end{equation*}
\text{satisfying}, $\forall\   v^h\in X^h \ \text{and}\ \forall\  q^h\in Q^h$,
\begin{align}
\label{nse_semi_v1}
    ( w_t^h, v^h)  +b^{*}(w^h,w^h,v^h)+\nu(\grad w^h,\grad  v^h)
    -( \lambda^h,\grad\cdot  v^h)&=(f, v^h),\\
    \label{nse_semi_v2}
    (\lambda_t^h+2\beta \div w_t^h+\alpha^{2}\div w^h,q^h)&=0.
\end{align}
Let the time step and other quantities be denoted by 
\begin{align*}
    \text{time-step}=k,\ t_n=nk,\ f_n( x)=f( x,t_n), \\
     w_n^h( x)=\text{approximation to}\  u( x,t_n), \\
    \lambda_n^h( x)=\text{approximation to}\ p( x,t_n).
\end{align*}
To get a full discretization, we consider finite element spatial discretization and the first-order Backward Euler scheme for time discretization. We also test a higher order scheme in \Cref{sec:CN}.
Given $( w_n^h,\lambda_n^h)\in (X^h,Q^h)$, for all $v^h\in X^h$ and $q^h\in Q^h$, find $( w_{n+1}^h,\lambda_{n+1}^h)\in (X^h,Q^h)$ satisfying
\begin{equation}\label{nse_fem_be1}
\begin{aligned}
  &\Big( \frac{ w_{n+1}^h- w_n^h}{k}, v^h\Big)+b^*( w^h_n, w^h_{n+1}, v^h) 
    +\nu(\grad  w^h_{n+1},\grad  v^h)-(\lambda^h_{n+1},\div  v^h)
    \\&=(f_{n+1}( x), v^h),
    \end{aligned}
\end{equation}    
\begin{equation}
    \begin{aligned}
    \label{nse_fem_be2}
    &\Big( \frac{ \lambda_{n+1}^h- \lambda_n^h}{k}, q^h\Big)+2\beta\Big( \div(\frac{  w_{n+1}^h- w_n^h}{k}), q^h\Big)
    +\alpha^{2}(\div w_{n+1}^h,q^h)=0.
\end{aligned}
\end{equation}
This method is semi-implicit. 
Equation \eqref{nse_fem_be2} says that
$$\lambda_{n+1}^h=\Pi_{Q}(\lambda_n^h-(k\alpha^2+2\beta)\div w_{n+1}^h+2\beta\div w_n^h).$$
Notice that $\lambda_{n+1}^h\in Q^h$ is well defined since $\int_{\Omega}\div w^h_{n+1}d\Omega=\int_{\partial \Omega} w_{n+1}^h\cdot n \ ds=0$. 
Inserting this in \eqref{nse_fem_be1} gives
\begin{equation}\label{nse_fem_be12}
\begin{aligned}
   &( \frac{ w_{n+1}^h- w_n^h}{k}, v^h)+b^*( w^h_n, w^h_{n+1}, v^h)
    +\nu(\grad  w^h_{n+1},\grad  v^h)
    \\& -2\beta( \Pi_{Q}(\div w^h_{n}),\div v^h)
    + (k\alpha^2+2\beta) (\Pi_{Q}( \div w^h_{n+1}), \div v^h)
    \\&=(f_{n+1}( x), v^h) + (\lambda^h_{n},\div v^h),
    \end{aligned}
\end{equation}    
\begin{theorem}
The fully coupled method \eqref{nse_fem_be1} and \eqref{nse_fem_be2} is equivalent to \eqref{nse_fem_be12} and \eqref{nse_fem_be2}. 
\end{theorem}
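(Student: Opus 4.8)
The plan is to prove the two-way implication by noting that the second equation \eqref{nse_fem_be2} is common to both systems; hence it suffices to show that, \emph{under} \eqref{nse_fem_be2}, the momentum balance \eqref{nse_fem_be1} is equivalent to its rewritten form \eqref{nse_fem_be12}. The entire argument reduces to substituting the closed-form pressure update into the momentum equation and then reversing the substitution, so the proof is purely algebraic.

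First I would extract the pressure update from \eqref{nse_fem_be2}. Multiplying by $k$ and regrouping gives, for every $q^h\in Q^h$,
\begin{equation*}
(\lambda_{n+1}^h,q^h)=(\lambda_n^h-(k\alpha^2+2\beta)\div w_{n+1}^h+2\beta\div w_n^h,\,q^h).
\end{equation*}
Since $w_{n+1}^h,w_n^h\in X^h\subset (H_0^1(\Omega))^d$, each divergence satisfies $\int_\Omega \div w_{n+1}^h\,dx=\int_{\partial\Omega} w_{n+1}^h\cdot n\,ds=0$ (and likewise for $w_n^h$), so the bracketed argument lies in $L_0^2(\Omega)$ and $\Pi_Q$ applies to it. Because $\lambda_{n+1}^h\in Q^h$ and the identity holds for all test functions $q^h\in Q^h$, the definition \eqref{l2proj} of the $L_0^2$ projection yields exactly
\begin{equation*}
\lambda_{n+1}^h=\Pi_Q\big(\lambda_n^h-(k\alpha^2+2\beta)\div w_{n+1}^h+2\beta\div w_n^h\big),
\end{equation*}
which is the formula already recorded in the text just above the statement.

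Next I would insert this expression into the pressure-coupling term $-(\lambda_{n+1}^h,\div v^h)$ of \eqref{nse_fem_be1}. Using linearity of $\Pi_Q$ and the fact that $\Pi_Q$ acts as the identity on $Q^h$ (so $\Pi_Q(\lambda_n^h)=\lambda_n^h$), this term splits as
\begin{equation*}
-(\lambda_{n+1}^h,\div v^h)=-(\lambda_n^h,\div v^h)+(k\alpha^2+2\beta)(\Pi_Q(\div w_{n+1}^h),\div v^h)-2\beta(\Pi_Q(\div w_n^h),\div v^h).
\end{equation*}
Moving the $\lambda_n^h$ contribution to the right-hand side reproduces \eqref{nse_fem_be12} verbatim. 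The reverse implication is the same computation read backwards: starting from \eqref{nse_fem_be12}, the same three identities let me collapse the two projected divergence terms and the $(\lambda_n^h,\div v^h)$ term back into $-(\lambda_{n+1}^h,\div v^h)$, recovering \eqref{nse_fem_be1}; here one again invokes \eqref{nse_fem_be2} to guarantee that $\lambda_{n+1}^h$ is precisely the projection collapsed in that step.

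Since no estimate is involved, there is no analytic obstacle. The only point demanding care is the passage from the variational identity \eqref{nse_fem_be2} to the pointwise projection formula: one must verify that the bracketed quantity has zero mean so that $\Pi_Q$ is well defined, and that testing against all $q^h\in Q^h$ (rather than all of $L^2$) suffices precisely because $\lambda_{n+1}^h$ already lies in $Q^h$. I would also emphasize that $\div v^h$ need not belong to $Q^h$, so every pairing above remains an ordinary $L^2(\Omega)$ inner product and the projection is applied only to the arguments that are genuinely projected.
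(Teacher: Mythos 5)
Your proof is correct and follows essentially the same route as the paper, whose ``proof'' is the derivation given in the text immediately preceding the theorem: extract the pointwise projection formula for $\lambda_{n+1}^h$ from \eqref{nse_fem_be2} and substitute it into the pressure term of \eqref{nse_fem_be1}, using linearity of $\Pi_Q$ and $\Pi_Q(\lambda_n^h)=\lambda_n^h$. Your write-up is in fact slightly more careful than the paper's, since you explicitly justify the zero-mean condition for both divergence terms, note that testing against $Q^h$ suffices because $\lambda_{n+1}^h\in Q^h$, and spell out the reverse implication.
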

\begin{remark}
We can also decouple before space discretization then discretize. This yields almost the same system except the projection $\Pi_{Q}$ do not occur. This gives the alternate form used in the numerical tests: find $( w_{n+1}^h,\lambda_{n+1}^h)\in (X^h,Q^h)$ satisfying
\begin{equation}\label{nse_fem_be13}
\begin{aligned}
   &\Big( \frac{ w_{n+1}^h- w_n^h}{k}, v^h\Big)+b^*( w^h_n, w^h_{n+1}, v^h)
    +\nu(\grad  w^h_{n+1},\grad  v^h)
    \\& -2\beta( \div w^h_{n},\div v^h)
    + (k\alpha^2+2\beta) ( \div w^h_{n+1}, \div v^h)
   \\& =(f_{n+1}( x), v^h) + (\lambda^h_{n},\div v^h),
    \end{aligned}
\end{equation}    
\begin{equation}
    \begin{aligned}
    \label{nse_fem_be23}
    &( \lambda_{n+1}^h, q^h)=( \lambda_{n}^h, q^h)-(k\alpha^2+2\beta) ( \div w^h_{n+1},q^h )+2\beta (\div w^h_{n},q^h).
\end{aligned}
\end{equation}
\end{remark}

%
\section{Stability}\label{sec:4}
In this section, we prove the unconditional stability of the model \eqref{nse} and the semi-implicit method (\eqref{nse_fem_be1} \& \eqref{nse_fem_be2}) in \cref{thm:stability} and \Cref{thm:be}, respectively.
\begin{theorem}\label{thm:stability}
(Stability of $ w$) \eqref{nse} is unconditionally stable. The solution $ w$ of \eqref{nse}  satisfies the following inequality
\begin{align*}
     &\| w(T)\|^2+\alpha^{-2}\| (\lambda+2\beta\div w)(T)\|^2
    +\int_0^T\Big(\nu\|\grad w\|^2+4\beta \|\div w\|^2\Big)\ dt
    \\&
    \leq\int_0^T \frac{1}{\nu}\|f\|_{-1}^2\, dt+\| w_0\|^2+\alpha^{-2}\| \lambda_0+2\beta\div w_0\|^2 .
\end{align*}
\end{theorem}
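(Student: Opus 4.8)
The plan is to derive an exact energy identity by testing each equation of the weak formulation \eqref{nse_v1}--\eqref{nse_v2} with the function that exposes the combination $\psi := \lambda + 2\beta\,\div w$ appearing in the statement, and then to add the two resulting identities so that the indefinite pressure coupling cancels. First I would test the momentum equation \eqref{nse_v1} with $v = w$. Since the nonlinearity has been skew-symmetrized, $b^*(w,w,w)=0$ by the construction of $b^*$, and using $(w_t,w)=\frac12\frac{d}{dt}\|w\|^2$ this leaves the identity
$$\frac{1}{2}\frac{d}{dt}\|w\|^2 + \nu\|\grad w\|^2 - (\lambda, \div w) = (f,w).$$

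Next I would treat the pressure relaxation \eqref{nse_v2}. Because $w=0$ on $\partial\Omega$, the divergence theorem gives $\int_\Omega \div w\, dx = 0$, so both $\lambda$ and $\div w$ lie in $Q=L^2_0(\Omega)$; consequently $\psi = \lambda + 2\beta\,\div w \in Q$ and $\psi_t = \lambda_t + 2\beta\,\div w_t$. Rewriting \eqref{nse_v2} as $(\psi_t + \alpha^2\,\div w,\, q)=0$ for all $q\in Q$ and choosing the admissible test function $q = \alpha^{-2}\psi$, then expanding $(\div w,\psi)=(\div w,\lambda)+2\beta\|\div w\|^2$, yields
$$\frac{1}{2}\alpha^{-2}\frac{d}{dt}\|\psi\|^2 + (\div w, \lambda) + 2\beta\|\div w\|^2 = 0.$$
This is exactly the relation needed to dispose of the unsigned term $(\lambda,\div w)$ in the momentum balance.

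I would then add the two identities; the $(\lambda,\div w)$ contributions cancel and produce
$$\frac{1}{2}\frac{d}{dt}\left(\|w\|^2 + \alpha^{-2}\|\psi\|^2\right) + \nu\|\grad w\|^2 + 2\beta\|\div w\|^2 = (f,w).$$
To close the estimate I would bound the forcing using the dual norm and Young's inequality, $(f,w)\le \|f\|_{-1}\|\grad w\| \le \frac{1}{2\nu}\|f\|_{-1}^2 + \frac{\nu}{2}\|\grad w\|^2$, absorb the $\frac{\nu}{2}\|\grad w\|^2$ term on the left, integrate over $[0,T]$, and finally multiply by $2$. The factor $2$ converts $2\beta\|\div w\|^2$ into the stated $4\beta\|\div w\|^2$ and recovers the initial-data terms with $\psi_0 = \lambda_0 + 2\beta\,\div w_0$.

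The calculation is routine once the test functions are fixed; the only genuine point to verify is the admissibility of $q=\alpha^{-2}\psi$, i.e. that $\psi$ has zero mean (established above from the no-slip condition) and that it is regular enough in time for the identity $\frac{d}{dt}\|\psi\|^2 = 2(\psi_t,\psi)$ to hold. I would supply the latter as a standing smoothness hypothesis on the solution $(w,\lambda)$ of \eqref{nse} rather than prove existence. It is worth emphasizing that no Gronwall argument is required here: the nonlinear term vanishes identically, which is precisely what makes the bound hold unconditionally in time.
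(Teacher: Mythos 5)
Your proof is correct and follows essentially the same route as the paper: test the momentum equation with $v=w$ and the relaxation equation with a multiple of $\lambda+2\beta\div w$, cancel the pressure coupling, apply Young's inequality to $(f,w)$, and integrate in time. The only cosmetic difference is that you build the factor $\alpha^{-2}$ into the test function from the start (and add a welcome remark on its admissibility in $Q$), whereas the paper takes $q=\lambda+2\beta\div w$ and rescales afterward.
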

\begin{proof}
 Take $v=w$ in \eqref{nse_v1}, $q=\lambda+2\beta\div w$ in \eqref{nse_v2}, and add them. We get the following energy equality,
\begin{equation}\label{energy_equality}
\begin{aligned}
    &\frac{1}{2}\frac{d}{dt}\| w\|^2+\nu\|\grad w\|^2-(\lambda,\div w)+(\div w,\lambda+2\beta\div w)
    \\&+\frac{\alpha^{-2}}{2}\frac{d}{dt}\| \lambda+2\beta\div w\|^2=(f, w).
    \end{aligned}
\end{equation}
Notice that $-(\lambda,\div w)+(\div w,\lambda+2\beta\div w)=(\div w,2\beta \div w)=2\beta \|\div w\|^2$. Hence, we can rewrite \eqref{energy_equality} as,
\begin{equation}\label{energy_equality2}
    \frac{1}{2}\frac{d}{dt}\| w\|^2+\nu\|\grad w\|^2+2\beta \|\div w\|^2+\frac{\alpha^{-2}}{2}\frac{d}{dt}\| \lambda+2\beta\div w\|^2=(f, w).
\end{equation}
Using Young's inequality at the right hand side of \eqref{energy_equality2} we get,
\begin{equation}\label{energy_inequality}
   \frac{d}{dt}(\| w\|^2+\alpha^{-2}\| \lambda+2\beta\div w\|^2)+\nu\|\grad w\|^2+4\beta \|\div w\|^2\leq \frac{1}{\nu}\|f\|_{-1}^2.
\end{equation}
Integrating \eqref{energy_inequality} from $t=0$ to $t=T$, we have
\begin{align*}
    &\Big(\| w(T)\|^2+\alpha^{-2}\| (\lambda+2\beta\div w)(T)\|^2\Big)
    +\int_0^T\Big(\nu\|\grad w\|^2+4\beta \|\div w\|^2\Big)\ dt
    \\&
    \leq\int_0^T \frac{1}{\nu}\|f\|_{-1}^2\, dt+\Big(\| w(0)\|^2+\alpha^{-2}\| (\lambda+2\beta\div w)(0)\|^2\Big) .
\end{align*}
\end{proof}
Recall that ${\Pi}_{Q}$ is the $L_0^2$ projection (\Cref{def:l2proj}) into $Q^h$.
\begin{remark}
(Stability of $ w^h$) \eqref{nse} is unconditionally stable. The semi-discrete solution $ w^h$ of \eqref{nse}  satisfies the following inequality
\begin{align*}
     &\| w^h(T)\|^2+\alpha^{-2}\| (\lambda^h+2\beta\Pi_{Q}(\div w^h))(T)\|^2
   \\& +\int_0^T\Big(\nu\|\grad w^h\|^2+4\beta \|\Pi_{Q}(\div w^h)\|^2\Big)\ dt
 \\&
\leq\int_0^T \frac{1}{\nu}\|f\|_{-1}^2\, dt+\| w^h(0)\|^2+\alpha^{-2}\| (\lambda^h+2\beta\Pi_{Q}(\div w^h))(0)\|^2 .
\end{align*}
\end{remark}
\begin{theorem}\label{thm:be}
The method  \eqref{nse_fem_be1} and \eqref{nse_fem_be2} is unconditionally energy stable. For any $N\geq 1$,
\begin{gather*}
     \frac{1}{2}\| w^h_N\|^2+\frac{\alpha^{-2}}{2}\|  \lambda^h_{N}+2\beta{\Pi}_{Q}(\div w_{N}^h)\|^2
    +k\sum_{n=0}^{N-1} (
    2\beta\|{\Pi}_{Q}(\div w_{n+1}^h)\|^2
    \\+\nu\|\grad  w^h_{n+1}\|^2)
    +\sum_{n=0}^{N-1}\frac{1}{2}\Big(
    \alpha^{-2}\|\lambda^h_{n+1}+2\beta{\Pi}_{Q}(\div w_{n+1}^h)
    -\lambda^h_{n}-2\beta{\Pi}_{Q}(\div w_{n}^h)\|^2
    \\+\| w^h_{n+1}- w^h_n\|^2\Big) 
    =\frac{1}{2}\| w^h_0\|^2+\frac{\alpha^{-2}}{2}\|  \lambda^h_{0}+2\beta{\Pi}_{Q}(\div w_{0}^h)\|^2+k\sum_{n=0}^{N-1} (f_{n+1}, w^h_{n+1}).
\end{gather*}
\end{theorem}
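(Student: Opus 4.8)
The plan is to reproduce at the fully discrete level the energy argument used for the continuous model in \Cref{thm:stability}, with the time derivative replaced by the backward difference and the discrete divergence handled through the $L^2_0$ projection $\Pi_Q$. A key point to keep in mind throughout is that the assertion is an energy \emph{equality}, not a bound, so I will carry the forcing term $(f_{n+1}, w^h_{n+1})$ along untouched and never invoke Young's inequality as was done to pass from \eqref{energy_equality2} to \eqref{energy_inequality}.

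First I would test the momentum equation \eqref{nse_fem_be1} with $v^h = w^h_{n+1}$. The skew-symmetry built into $b^*$ (\Cref{trilinear0}) gives $b^*(w^h_n, w^h_{n+1}, w^h_{n+1}) = 0$, so the nonlinearity drops out. Applying the polarization identity $(a-b,a) = \tfrac{1}{2}(\|a\|^2 - \|b\|^2 + \|a-b\|^2)$ to the discrete time-derivative term and multiplying through by $k$ yields
\[
\tfrac{1}{2}\big(\|w^h_{n+1}\|^2 - \|w^h_n\|^2 + \|w^h_{n+1}-w^h_n\|^2\big) + k\nu\|\grad w^h_{n+1}\|^2 - k(\lambda^h_{n+1}, \div w^h_{n+1}) = k(f_{n+1}, w^h_{n+1}).
\]
Since $\lambda^h_{n+1} \in Q^h$, the defining property \eqref{l2proj} of $\Pi_Q$ lets me replace $(\lambda^h_{n+1}, \div w^h_{n+1})$ by $(\lambda^h_{n+1}, \Pi_Q(\div w^h_{n+1}))$; this is precisely the step that introduces the projection appearing in the statement.

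Next I would process the constraint equation \eqref{nse_fem_be2}. Testing against an arbitrary $q^h \in Q^h$, every divergence factor may again be projected onto $Q^h$, so the first two terms combine into a single backward difference of $\Phi_n := \lambda^h_n + 2\beta \Pi_Q(\div w^h_n)$, giving $\big(k^{-1}(\Phi_{n+1}-\Phi_n), q^h\big) + \alpha^2\big(\Pi_Q(\div w^h_{n+1}), q^h\big) = 0$. Choosing $q^h = \Phi_{n+1}$, applying polarization once more, and scaling by $k\alpha^{-2}$ produces
\[
\tfrac{\alpha^{-2}}{2}\big(\|\Phi_{n+1}\|^2 - \|\Phi_n\|^2 + \|\Phi_{n+1}-\Phi_n\|^2\big) + 2k\beta\|\Pi_Q(\div w^h_{n+1})\|^2 + k(\lambda^h_{n+1}, \Pi_Q(\div w^h_{n+1})) = 0.
\]

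The decisive step, which I expect to be the only genuinely delicate point, is adding the two identities: the cross term $-k(\lambda^h_{n+1}, \Pi_Q(\div w^h_{n+1}))$ from the momentum equation cancels exactly the $+k(\lambda^h_{n+1}, \Pi_Q(\div w^h_{n+1}))$ from the constraint equation. This cancellation is exactly what dictates the consistent projection of every divergence factor and the particular choice $q^h = \Phi_{n+1}$; without projecting $\div w^h_{n+1}$ against $\lambda^h_{n+1}$, the two coupling terms would not match and the balance would fail to close. After the cancellation, summing over $n = 0, \dots, N-1$ telescopes the $\|w^h\|^2$ and $\|\Phi\|^2$ differences into their values at $N$ and $0$, leaving the squared-increment sums and the dissipation sums $k\sum(\nu\|\grad w^h_{n+1}\|^2 + 2\beta\|\Pi_Q(\div w^h_{n+1})\|^2)$ on the left and $k\sum(f_{n+1}, w^h_{n+1})$ on the right, which is precisely the asserted identity.
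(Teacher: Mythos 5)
Your proposal is correct and follows essentially the same route as the paper: test the momentum equation with $w^h_{n+1}$ and the constraint equation with $\lambda^h_{n+1}+2\beta\Pi_Q(\div w^h_{n+1})$, use skew-symmetry of $b^*$, the polarization identity, the exact cancellation of the pressure--divergence cross terms (leaving $2k\beta\|\Pi_Q(\div w^h_{n+1})\|^2$), and telescope the sum. The only difference is cosmetic --- you insert the projection into the momentum equation's pressure term and rewrite the constraint as a backward difference of $\Phi_n$ before testing, whereas the paper computes the same combination directly.
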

\begin{proof}
 Take  $ v^h= w^h_{n+1}$ in \eqref{nse_fem_be1} and $q^h=\lambda^h_{n+1}+2\beta{\Pi}_{Q}(\div w_{n+1}^h)$ in \eqref{nse_fem_be2}, and add them. Note that $b^*( w^h_n, w^h_{n+1}, w^h_{n+1})$ $=0$. Hence after multiplying by $k$, we get,
\begin{gather*}
    \| w^h_{n+1}\|^2-( w^h_{n+1}, w^h_n)+\alpha^{-2}\|  \lambda^h_{n+1}+2\beta{\Pi}_{Q}(\div w_{n+1}^h)\|^2
    \\-\alpha^{-2}( \lambda^h_{n+1}+2\beta{\Pi}_{Q}(\div w_{n+1}^h), \lambda^h_{n}
    +2\beta{\Pi}_{Q}(\div w_{n}^h)) 
    -k(\lambda_{n+1}^h,\div w_{n+1}^h)
    \\+k(\div w_{n+1}^h, \lambda^h_{n+1}+2\beta{\Pi}_{Q}(\div w_{n+1}^h))+k \nu\|\grad  w^h_{n+1}\|^2= k(f_{n+1}, w^h_{n+1}).
\end{gather*}
For the second and fourth term, apply the polarization identity, 
\begin{gather*}
    ( w^h_{n+1}, w^h_n)=\frac{1}{2}\| w^h_{n+1}\|^2+\frac{1}{2}\| w^h_n\|^2-\frac{1}{2}\| w^h_{n+1}- w^h_n\|^2, \\
    \alpha^{-2}( \lambda^h_{n+1}+2\beta{\Pi}_{Q}(\div w_{n+1}^h), \lambda^h_{n}+2\beta{\Pi}_{Q}(\div w_{n}^h))
    \\=\frac{\alpha^{-2}}{2}\| \lambda^h_{n+1}+2\beta{\Pi}_{Q}(\div w_{n+1}^h)\|^2
    +\frac{\alpha^{-2}}{2}\|  \lambda^h_{n}+2\beta{\Pi}_{Q}(\div w_{n}^h)\|^2
    \\-\frac{\alpha^{-2}}{2}\|\lambda^h_{n+1}+2\beta{\Pi}_{Q}(\div w_{n+1}^h)- \lambda^h_{n}-2\beta{\Pi}_{Q}(\div w_{n}^h)\|^2.
\end{gather*}
Next, we have,
\begin{align*}
&-k(\lambda_{n+1}^h,\div w_{n+1}^h)+k(\div w_{n+1}^h, \lambda^h_{n+1}+2\beta{\Pi}_{Q}(\div w_{n+1}^h))
\\&=k(\div w_{n+1}^h,2\beta{\Pi}_{Q}(\div w_{n+1}^h))
\\&=2k\beta\|{\Pi}_{Q}(\div w_{n+1}^h)\|^2.
\end{align*}
Collecting terms and summing from $n=0$ to $N-1$, we get the desired result.
\end{proof}

\section{Error Analysis}\label{sec:5}
In this section, we analyze the error between the strong solution to NSE and semi-discrete solutions to \eqref{nse} in \cref{thm:numerial-error1}. 
\begin{theorem}\label{thm:numerial-error1}
(Numerical error of semi-discrete case) Let $(X^h,Q^h)$ be the finite element spaces satisfying \cref{prop} and \cref{infsup}. Let $ u$ be a strong solution of the NSE \eqref{nse0}. Suppose the estimate \cref{proj} in $H^{-1}(\Omega)$ holds and $\grad u\in L^4(0,T;L^2(\Omega))$. Let
\begin{equation*}
a(t):=\max\{2,\frac{1}{2}+C(\nu)\|\grad u\|^4\}.
\end{equation*}
Then, we have the following error estimate:
\begin{gather*}
    \sup_{0\leq t\leq T}(\|(u-w^h)(t)\|^2+2\alpha^{-2}\|2\beta{\Pi}_{Q}(\div{(u-w^h))(t)}\|^2)
    \\+\int_0^T\Big( \frac{\nu}{4}\|\grad (u-w^h)\|^2+2\beta\|{\Pi}_{Q}(\div (u-w^h))\|^2\Big) dt \\
    \leq e^{\int_0^T a(t)dt}
    \Big\{\Big(\| (u-w^h)(0)\|^2+\alpha^{-2}\| (p-\lambda^h)(0)+2\beta{\Pi}_{Q}(\div (u-w^h))(0)\|^2\Big)
    \\+C(\nu,\Omega)\Big[h^{2m}
    (\|u\|^2_{L^4(0,T;H^{m+1}(\Omega))}
    +\|p\|^2_{L^4(0,T;H^{m}(\Omega))})\Big]+{\alpha^{-2}}\|p_t\|_{L^2(0,T;L^2(\Omega))}^2
    \\
    +\Big[(C(\nu,\Omega)(h^{2m}+
   h^{2+2m})(\|u_t\|^2_{L^2(0,T;H^{m+1}(\Omega))}
    +\|p_t\|^2_{L^2(0,T;H^{m}(\Omega))})\Big]
    \Big\}.
\end{gather*}
\end{theorem}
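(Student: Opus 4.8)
The plan is to run an energy argument for the error that mirrors the stability proof of \Cref{thm:be}, while carefully tracking the single consistency residual created by differentiating the penalty relation. First I would form the error equations: testing the NSE weak form \eqref{nse_re_v1}--\eqref{nse_re_v2} against $(v^h,q^h)\in(X^h,Q^h)$, subtracting the semi-discrete equations \eqref{nse_semi_v1}--\eqref{nse_semi_v2}, and using $\div u=0$, I get a velocity error equation together with the pressure error equation
\[
(( p-\lambda^h)_t+2\beta\div( u- w^h)_t+\alpha^2\div( u- w^h),q^h)=(p_t,q^h),
\]
whose right side $(p_t,q^h)$ is the \emph{only} inconsistency; this is exactly the term that will generate the $\alpha^{-2}\|p_t\|_{L^2(0,T;L^2)}^2$ contribution. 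I would then split the errors through the Stokes projection $\Pi_S( u,p)=(\widetilde u,\widetilde p)$, writing $ u- w^h=\eta-\phi^h$ and $ p-\lambda^h=\eta_p-\psi^h$ with $\eta= u-\widetilde u$, $\eta_p=p-\widetilde p$, $\phi^h= w^h-\widetilde u\in X^h$, and $\psi^h=\lambda^h-\widetilde p\in Q^h$.

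Next I would exploit the defining orthogonalities of the Stokes projection \eqref{stokes1}. Testing the velocity equation with $v^h=\phi^h$, the first line of \eqref{stokes1} kills the coupled term $\nu(\grad\eta,\grad\phi^h)-(\eta_p,\div\phi^h)=0$, and the second line gives $\Pi_{Q}(\div\eta)=0$, so $\Pi_{Q}(\div( u- w^h))=-\Pi_{Q}(\div\phi^h)$ and the combined pressure error reduces to $ E:=(p-\lambda^h)+2\beta\Pi_{Q}(\div( u- w^h))=\eta_p-\Phi$ with discrete part $\Phi:=\psi^h+2\beta\Pi_{Q}(\div\phi^h)\in Q^h$. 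Testing the pressure error equation with $q^h=\Phi$ and eliminating the velocity--pressure coupling $(\psi^h,\div\phi^h)$ exactly as in \Cref{thm:be}, I expect the key identity
\[
\tfrac12\tfrac{d}{dt}\big(\|\phi^h\|^2+\alpha^{-2}\|\Phi\|^2\big)+\nu\|\grad\phi^h\|^2+2\beta\|\Pi_{Q}(\div\phi^h)\|^2=(\eta_t,\phi^h)+\mathrm{NL}+\alpha^{-2}(\eta_{p,t},\Phi)-\alpha^{-2}(p_t,\Phi),
\]
where $\mathrm{NL}=b^*( u, u,\phi^h)-b^*( w^h, w^h,\phi^h)$.

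The main obstacle is $\mathrm{NL}$. I would decompose it as $b^*( u- w^h, u,\phi^h)+b^*( w^h, u- w^h,\phi^h)$, insert $ u- w^h=\eta-\phi^h$, and use the skew-symmetry $b^*( w^h,\phi^h,\phi^h)=0$ to reach $b^*(\eta, u,\phi^h)+b^*( w^h,\eta,\phi^h)-b^*(\phi^h, u,\phi^h)$. The decisive piece $b^*(\phi^h, u,\phi^h)$ carries the smooth factor $ u$ in the middle slot, so the sharp bound in \Cref{trilinear_ineq} gives $|b^*(\phi^h, u,\phi^h)|\le C_2\|\phi^h\|^{1/2}\|\grad\phi^h\|^{3/2}\|\grad u\|$, which Young's inequality splits as $\tfrac{\nu}{8}\|\grad\phi^h\|^2+C(\nu)\|\grad u\|^4\|\phi^h\|^2$; the latter is precisely the $C(\nu)\|\grad u\|^4$ in $a(t)$ and explains the hypothesis $\grad u\in L^4(0,T;L^2(\Omega))$. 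The projection-error pieces $b^*(\eta, u,\phi^h)$ and $b^*( w^h,\eta,\phi^h)$ I would control by \Cref{trilinear_ineq} and Young (each absorbing a $\tfrac{\nu}{8}\|\grad\phi^h\|^2$), producing $\nu^{-1}\|\grad\eta\|^2\|\grad u\|^2$-type integrands whose time integrals, by Cauchy--Schwarz in $t$, yield $L^4(0,T;\cdot)$ norms and hence the factor $h^{2m}(\| u\|^2_{L^4(0,T;H^{m+1})}+\|p\|^2_{L^4(0,T;H^{m})})$.

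Finally I would dispatch the remaining linear terms and close the estimate. The consistency term is handled by $|\alpha^{-2}(p_t,\Phi)|\le\tfrac{\alpha^{-2}}{2}\|p_t\|^2+\tfrac{\alpha^{-2}}{2}\|\Phi\|^2$, whose first half integrates to the advertised $\alpha^{-2}\|p_t\|_{L^2(0,T;L^2)}^2$ and whose second half is absorbed into the Gronwall sum (contributing the constants $2$ and $\tfrac12$ in $a(t)$); the term $(\eta_t,\phi^h)$ is bounded via the $H^{-1}$ estimate \eqref{proj} by $\|\eta_t\|_{-1}\|\grad\phi^h\|\le\tfrac{\nu}{8}\|\grad\phi^h\|^2+C\nu^{-1}\|\eta_t\|_{-1}^2$, and $\alpha^{-2}(\eta_{p,t},\Phi)$ by Young. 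Using \Cref{errstokes}, the time-differentiated Stokes estimates \eqref{errst1}, and the approximation properties \eqref{prop} turns $\|\eta_t\|_{-1}^2$ and $\|\eta_{p,t}\|^2$ into the remaining $(h^{2m}+h^{2+2m})(\|u_t\|^2_{L^2(0,T;H^{m+1})}+\|p_t\|^2_{L^2(0,T;H^{m})})$ terms. Collecting everything, integrating on $[0,T]$, absorbing the four $\tfrac{\nu}{8}\|\grad\phi^h\|^2$ terms to leave $\tfrac{\nu}{2}\|\grad\phi^h\|^2$, and applying the continuous Gronwall inequality with rate $a(t)$ bounds $\|\phi^h\|$, $\|\Phi\|$, and the dissipation. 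A concluding triangle inequality, using $ u- w^h=\eta-\phi^h$, $ E=\eta_p-\Phi$, $\Pi_{Q}(\div( u- w^h))=-\Pi_{Q}(\div\phi^h)$, and \Cref{errstokes}, then converts these discrete-error bounds into the stated estimate for $ u- w^h$ and for $(p-\lambda^h)+2\beta\Pi_{Q}(\div( u- w^h))$.
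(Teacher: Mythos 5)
Your proposal follows the paper's proof essentially step for step: the same combination of the velocity and pressure error equations, the same Stokes-projection splitting with test functions $\phi^h$ and $\psi^h+2\beta\Pi_{Q}(\div\phi^h)$, the same decomposition of the nonlinearity into $b^*(\eta,u,\phi^h)-b^*(\phi^h,u,\phi^h)+b^*(w^h,\eta,\phi^h)$ with the sharp trilinear bound producing the $C(\nu)\|\grad u\|^4$ Gronwall rate, and the same closing via \Cref{errstokes}, \eqref{errst1}, \eqref{prop}, and the triangle inequality. The only cosmetic difference is that you discard the $2\beta\,\div\eta_t$ contribution using the time-differentiated Stokes orthogonality, whereas the paper retains that term and bounds it by $\|\grad(u-\widetilde{u})_t\|^2$; both routes yield the same final estimate.
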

\begin{proof}
First, we combine \eqref{nse_re_v1} and \eqref{nse_re_v2} to get the following equation,
\begin{equation}\label{nse_v3}
\begin{aligned}
&( u_t, v)  +b^{*}(u,u,v)+\nu(\grad u,\grad  v)
    -( p,\div  v)
    \\&+\alpha^{-2}(p_t+2\beta\div u_t,q)+(\div u,q)
=\alpha^{-2}(p_t,q)+(f, v).
\end{aligned}
\end{equation}
Next, we combine \eqref{nse_semi_v1} and \eqref{nse_semi_v2} to get the following equation,
\begin{equation}\label{nse_semi_v3}
\begin{aligned}
&( w_t^h, v^h)  +b^{*}(w^h,w^h,v^h)+\nu(\grad w^h,\grad  v^h)
    -( \lambda^h,\div  v^h)
    \\&+\alpha^{-2}(\lambda_t^h+2\beta\div w_t^h,q^h)+(\div w^h,q^h)
    =(f, v^h).
\end{aligned}
\end{equation}
Since $ v\in X$\  \& \ 
$ v^h\in X^h\subset X$,
we restrict $ v= v^h$ in continuous variational problem. Similarly, we restrict $ q= q^h$ in continuous variational problem since $ q\in Q$ and $ q^h\in Q^h\subset Q$. Then, subtract semi-discrete problem \eqref{nse_semi_v3} from continuous problem \eqref{nse_v3}. Let ${e_u}= u- w^h$. This gives, 
\begin{equation}\label{err_1}
\begin{aligned}
  &\bigg(\frac{d}{dt}e_u,v^h\bigg)+b^{*}(u,u,v^h)-b^{*}(w^h,w^h,v^h)+\nu(\grad e_u,\grad  v^h)-( p- \lambda^h,\div  v^h)
  \\&
  +\alpha^{-2}\bigg(\frac{d}{dt}(p- \lambda^h+2\beta\div e_u),q^h\bigg)+(\div e_u,q^h)=\alpha^{-2}(p_t,q^h).
\end{aligned}
\end{equation} 
Let $\widetilde{ u}\in X^h$ and $\widetilde{ p}\in Q^h$ be the Stokes projection of $(u,p)$. 
Let $\eta_u= u- \widetilde{ u},\  \text{and}\
 \phi^h_u= w^h-\widetilde{ u}$.
This implies $e_u=\eta_u- \phi^h_u$. 
Then $\eqref{err_1}$ becomes
\begin{gather*}
 \bigg(\frac{d}{dt}\phi^h_u,v^h\bigg)+\nu(\grad \phi^h_u,\grad  v^h)-( \lambda^h-\widetilde{ p},\div  v^h)
  +\alpha^{-2}\bigg(\frac{d}{dt}(\lambda^h-\widetilde{ p}+2\beta\div \phi_u^h),q^h\bigg)
  \\+(\div \phi^h_u,q^h)
  = \bigg(\frac{d}{dt}\eta_u,v^h\bigg)+\nu(\grad \eta_u,\grad  v^h)-( p-\widetilde{p},\div  v^h)+\alpha^{-2}(p_t,q^h)
 \\ +\alpha^{-2}\bigg(\frac{d}{dt}(p-\widetilde{p}+2\beta\div \eta_u),q^h\bigg)
  +(\div \eta_u,q^h)+b^{*}(u,u,v^h)-b^{*}(w^h,w^h,v^h).
\end{gather*}
 Take $ v^h= \phi_u^h$ and $q^h=\lambda^h-\widetilde{ p}+2\beta {\Pi}_{Q}(\div \phi_u^h) $. Notice that $-( \lambda^h-\widetilde{ p},\div  \phi_u^h)+(\div \phi^h_u,\lambda^h-\widetilde{ p}+2\beta {\Pi}_{Q}(\div \phi_u^h))=2\beta {\Pi}_{Q}(\|\div  \phi_u^h\|^2)$. Furthermore due to the Stokes projection \eqref{stokes1}, $\nu(\grad \eta_u,\grad  \phi_u^h)-( p-\widetilde{p},\div  \phi_u^h)=0$ and $(\div \eta_u,\lambda^h-\widetilde{ p}+2\beta {\Pi}_{Q}(\div \phi_u^h))=0$.
Hence,
\begin{gather*}
    \frac{1}{2}\frac{d}{dt}\left\{\| \phi^h_u\|^2+\alpha^{-2}\| \lambda^h-\widetilde{ p}+2\beta {\Pi}_{Q}(\div \phi_u^h)\|^2\right\}+\nu\|\grad \phi^h_u\|^2 +2\beta \|{\Pi}_{Q}(\div  \phi_u^h)\|^2
    \\= \bigg(\frac{d}{dt}\eta_u,\phi_u^h\bigg)
  +\alpha^{-2}\bigg(\frac{d}{dt}(p-\widetilde{p}+2\beta\div \eta_u),\lambda^h-\widetilde{ p}+2\beta {\Pi}_{Q}(\div \phi_u^h)\bigg)
  \\+\alpha^{-2}(p_t,\lambda^h-\widetilde{ p}+2\beta {\Pi}_{Q}(\div \phi_u^h))+b^{*}(u,u,\phi_u^h)-b^{*}(w^h,w^h,\phi_u^h).
\end{gather*}
We can write the nonlinear terms as follows,
\begin{align*}
b^{*}(u,u,\phi_u^h)-b^{*}(w^h,w^h,\phi_u^h)=b^{*}(\eta_u,  u, \phi_u^h)-b^{*}(\phi_u^h,  u, \phi_u^h)+b^{*}( w^h, \eta_u, \phi_u^h).
\end{align*}
Next, we find the bounds for the terms on the right hand side. For the first two terms on the right, use the Cauchy Schwarz 
and Young's inequality, 
\begin{align*}
    &\bigg(\frac{d}{dt}\eta_u,\phi_u^h\bigg)\leq \|\frac{d}{dt}\eta_u\|_{-1}\|\grad \phi^h_u\|
    \leq \frac{\nu}{2}\|\grad \phi^h_u\|^2+C(\nu)\|\frac{d}{dt}\eta_u\|_{-1}^2.
    \end{align*}
    \begin{align*}
    &\alpha^{-2}\bigg(\frac{d}{dt}(p-\widetilde{p}+2\beta\div \eta_u),\lambda^h-\widetilde{ p}+2\beta {\Pi}_{Q}(\div \phi_u^h)\bigg)
    \\&\leq \frac{\alpha^{-2}}{2}\|\frac{d}{dt}(p-\widetilde{p}+2\beta\div \eta_u)\|^2+\frac{\alpha^{-2}}{2}\|\lambda^h-\widetilde{ p}+2\beta {\Pi}_{Q}(\div \phi_u^h)\|^2.
\end{align*}
\begin{align*}
    \alpha^{-2}(p_t,\lambda^h-\widetilde{ p}+2\beta {\Pi}_{Q}(\div \phi_u^h))\leq \frac{\alpha^{-2}}{2}\|p_t\|^2+\frac{\alpha^{-2}}{2}\|\lambda^h-\widetilde{ p}+2\beta {\Pi}_{Q}(\div \phi_u^h)\|^2.
\end{align*}
Next, for the three nonlinear terms, applying \Cref{trilinear_ineq} and H\"older's inequality, we get the following estimates (see \cite[p.381]{john2016finite}),
\begin{align*}
     &|b^{*}(\eta_u,  u, \phi_u^h)|
    \leq\frac{\nu}{4}\|\grad \phi_u^h\|^2+C(\nu)\|\grad u\|^2\|\grad\eta_u\|^2,
    \\&| b^{*}(\phi_u^h,  u, \phi_u^h)|
    \leq \frac{\nu}{16}\|\grad \phi_u^h\|^2+C(\nu)\|\grad u\|^4\|\phi_u^h\|^2,
    \\&|b^{*}( w^h, \eta_u, \phi_u^h)|
    \leq \frac{\nu}{16}\|\grad \phi_u^h\|^2+C(\nu)\|w^h\|\|\grad w^h\|\|\grad\eta_u\|^2.
\end{align*}
Collecting all the terms, combining similar terms, and multiplying by 2, we have,
\begin{gather*}
    \frac{d}{dt}\left\{\| \phi^h_u\|^2+\alpha^{-2}\| \lambda^h-\widetilde{ p}+2\beta \Pi_{Q}(\div \phi_u^h)\|^2\right\}+\frac{\nu}{4}\|\grad \phi^h_u\|^2 +4\beta \|\Pi_Q(\div  \phi_u^h)\|^2
    \\ 
    \leq \alpha^{-2}\|\frac{d}{dt}(p-\widetilde{p}+2\beta\div \eta_u)\|^2+C(\nu)\|\grad u\|^4\|\phi_u^h\|^2+\frac{1}{2}\|\phi_u^h\|^2+ {\alpha^{-2}}\|p_t\|^2
    \\+C(\nu)\Big[\|\frac{d}{dt}\eta_u\|_{-1}^2\|+\|\grad u\|^2\|\grad\eta_u\|^2+\|w^h\|\|\grad w^h\|\|\grad\eta_u\|^2\Big]
    \\+2\alpha^{-2}\|\lambda^h-\widetilde{ p}+2\beta \Pi_{Q}(\div \phi_u^h)\|^2.
\end{gather*}
Denote $a(t):=\max\{2,\frac{1}{2}+C(\nu)\|\grad u\|^4\}$ and its antiderivative is 
\begin{equation*}
    A(t):=\int_0^t a(t')\ dt'<\infty\ \text{for}\ 
    \grad u\in L^4(0,T;L^2(\Omega)).
\end{equation*}
First, we multiply through by the integrating factor $e^{-A(t)}$. Then, integrating over $[0,T]$ and multiplying through by $e^{A(t)}$ give the following:
\begin{gather*}
    \left\{\| \phi^h_u(T)\|^2+\alpha^{-2}\|(\lambda^h-\widetilde{ p}+2\beta {\Pi}_{Q}(\div \phi_u^h))(T)\|^2\right\}
    \\+\int_0^T\Big( \frac{\nu}{4}\|\grad \phi_u^h\|^2+2\beta\|{\Pi}_{Q}(\div \phi^h_u)\|^2\Big) dt \\
    \leq e^{A(t)}\Big\{\Big(\| \phi^h_u(0)\|^2+\alpha^{-2}\| (\lambda^h-\widetilde{ p}+2\beta {\Pi}_{Q}(\div \phi_u^h))(0)\|^2\Big) 
    \\+\int_0^T\Big[ \alpha^{-2}\|\frac{d}{dt}(p-\widetilde{p}+2\beta\div \eta_u)\|^2+ {\alpha^{-2}}\|p_t\|^2+C(\nu)\Big[\|\frac{d}{dt}\eta_u\|_{-1}^2\|
    \\+\|\grad u\|^2\|\grad\eta_u\|^2
    +\|w^h\|\|\grad w^h\|\|\grad\eta_u\|^2\Big]
    \Big] dt\Big\}.
\end{gather*}
Notice that, 
\begin{align*}
\alpha^{-2}\|\frac{d}{dt}(p-\widetilde{p}+2\beta\div \eta_u)\|^2&=\alpha^{-2}(\|(p-\widetilde{p})_t\|+2\beta \|\div{(u-\widetilde{u})_t}\|)^2,
\\&\leq\alpha^{-2}(2\|(p-\widetilde{p})_t\|^2+4\beta \|\div{(u-\widetilde{u})_t}\|^2),
\\&\leq\alpha^{-2}(2\|(p-\widetilde{p})_t\|^2+4\beta C \|\grad{(u-\widetilde{u})_t}\|^2),
\\&\leq \big(\frac{2\alpha^{-2}}{(\gamma^h)^2}+4C\beta\big)\|\grad{(u-\widetilde{u})_t}\|^2.
\end{align*}
After applying H\"{o}lder's inequality, 
we get the following:
\begin{gather*}
\int_0^T \|\grad u\|^2\|\grad\eta_u\|^2dt\leq \|\grad u\|^2_{L^4(0,T;L^2(\Omega))}\|\grad\eta_u\|^2_{L^4(0,T;L^2(\Omega))},
\\
\int_0^T \|w^h\|\|\grad w^h\|\|\grad \eta_u\|^2\leq \|w^h\|_{L^{\infty}(0,T;L^2(\Omega))}\|\grad w^h\|_{L^2(0,T;L^2(\Omega))}\|\grad \eta_u\|_{L^4(0,T;L^2(\Omega))}^2.
\end{gather*}
$\|w^h\|_{L^{\infty}(0,T;L^2(\Omega))}$ and $\|\grad w^h\|_{L^2(0,T;L^2(\Omega))}$ are bounded by problem data due to stability bound. 
Using \Cref{errstokes} and \eqref{errst1}, we get, 
\begin{gather*}
    \left\{\| \phi^h_u(T)\|^2+\alpha^{-2}\|(\lambda^h-\widetilde{ p}+2\beta {\Pi}_{Q}(\div \phi_u^h))(T)\|^2\right\}
    \\+\int_0^T\Big( \frac{\nu}{4}\|\grad \phi_u^h\|^2+2\beta\|{\Pi}_{Q}(\div \phi^h_u)\|^2\Big) dt \\
    \leq e^{A(t)}\Big\{\Big(\| \phi^h_u(0)\|^2+\alpha^{-2}\| (\lambda^h-\widetilde{ p}+2\beta {\Pi}_{Q}(\div \phi_u^h)))(0)\|^2\Big)
    +{\alpha^{-2}}\|p_t\|_{L^2(0,T;L^2)}^2
    \\
    +C(\nu,\Omega)\Big[\inf_{v^h\in X^h}\|\grad(u-v^h)\|_{L^4(0,T;L^2)}^2+\inf_{q^h\in Q^h}\|p-q^h\|_{L^4(0,T;L^2)}^2\Big]
    \\
    +C(\nu,\Omega)(1+h^2)\bigg(\inf_{v^h\in X^h}\|\grad(u-v^h)_t\|_{L^2(0,T;L^2)}^2+\inf_{q^h\in Q^h}\|(p-q^h)_t\|_{L^2(0,T;L^2)}^2\bigg)
    \Big\}.
\end{gather*}
Using the approximation properties \cref{prop} of the spaces $(X^h,Q^h)$, we get,
\begin{gather*}
    \left\{\| \phi^h_u(T)\|^2+\alpha^{-2}\|(\lambda^h-\widetilde{ p}+2\beta {\Pi}_{Q}(\div \phi_u^h))(T)\|^2\right\}
    \\+\int_0^T\Big( \frac{\nu}{4}\|\grad \phi_u^h\|^2+2\beta\|{\Pi}_{Q}(\div \phi^h_u)\|^2\Big) dt \\
    \leq e^{A(t)}\Big\{\Big(\| \phi^h_u(0)\|^2+\alpha^{-2}\| (\lambda^h-\widetilde{ p}+2\beta {\Pi}_{Q}(\div \phi_u^h))(0)\|^2\Big)+{\alpha^{-2}}\|p_t\|_{L^2(0,T;L^2(\Omega))}^2
    \\+C(\nu,\Omega)\Big[h^{2m}(\|u\|^2_{L^4(0,T;H^{m+1}(\Omega))}+\|p\|^2_{L^4(0,T;H^{m}(\Omega))})\Big]
    \\+\Big[(C(\nu,\Omega)h^{2m}
    +C(\nu,\Omega)h^{2+2m})(\|u_t\|^2_{L^2(0,T;H^{m+1}(\Omega))}+\|p_t\|^2_{L^2(0,T;H^{m}(\Omega))})\Big]
    \Big\}.
\end{gather*}
Dropping the pressure term from $\|\lambda^h-\widetilde{ p}+2\beta {\Pi}_{Q}(\div \phi_u^h)\|$ and using the triangle inequality: $\|{e_u}\|\leq \| \phi_u^h\|+\|{\eta_u}\|$, we obtain the desired error estimate.
\end{proof}
\begin{remark}
For the Taylor-Hood finite element pair, we have the following error estimate:
\begin{gather*}
    \sup_{0\leq t\leq T}(\|(u-w^h)(t)\|^2+2\alpha^{-2}\|2\beta{\Pi}_{Q}(\div{(u-w^h))(t)}\|^2)
    \\+\int_0^T\Big( \frac{\nu}{4}\|\grad (u-w^h)\|^2+2\beta\|{\Pi}_{Q}(\div (u-w^h))\|^2\Big) dt 
    \leq \mathcal{O}(h^4).
\end{gather*}
\end{remark}
\section{Numerical Tests}\label{sec:6}
In this section, we conduct several numerical tests to evaluate the performance of the scheme we consider and compare it with PP and AC methods. Stability, convergence, damping of oscillations, and qualitative character of the method are evaluated. Throughout all computations, we consider the inf-sup stable Taylor-Hood finite element pair and use the publicly licensed finite element software FreeFEM++ \cite{hec12}.
\subsection{Numerical Stability} As a first numerical experiment, we depict the behavior of problem variables in a well-known test so-called the Taylor-Green vortex. In this setting, the initial conditions are taken as:
\begin{align*}
    & u(x,y,t)=e^{-2t / Re}(\cos x \sin y ,-\cos y \sin x),\\
    &p(x,y,t)=-\frac{1}{4}e^{-4t / Re}(\cos 2x + \cos 2y) .
\end{align*}
The test is applied on a unit square domain with a mesh resolution of $16 \times 16$ and velocity boundary conditions are set the same as the initial conditions all over the computational domain. We depict $\| w\|$, $\| \div w\|$, and $\| \lambda\|$ to expose the solution behavior in the time interval $[0,10]$. The Reynolds number is picked as $Re=1$. The results under the selection of different kinds of parameters are seen in \Cref{fig:stab}. As expected, increasing the parameters $\alpha$ and $\beta$ by taking them as reciprocal of $\Dt t$ results in smaller norm values. In this test setup, the time step is chosen as $\Dt t=0.1$ and we would like to point out that the smaller $\Dt t$ values, which means bigger $\alpha$ and $\beta$, would yield the smaller norms in the case of $\alpha^2=O(\Dt t ^{-1}),\,\,\beta=O(\Dt t ^{-1})$. We also test the case of small $\alpha$ and $\beta$ to better understand the effect of these parameters. The results obtained from this test verify the stability results which were proven theoretically in \Cref{sec:4}. 
\begin{figure}[h!]
\subfloat[$\alpha^2=O(\Dt t),\,\,\beta=O(\Dt t)$]{\includegraphics[width=6.5cm]{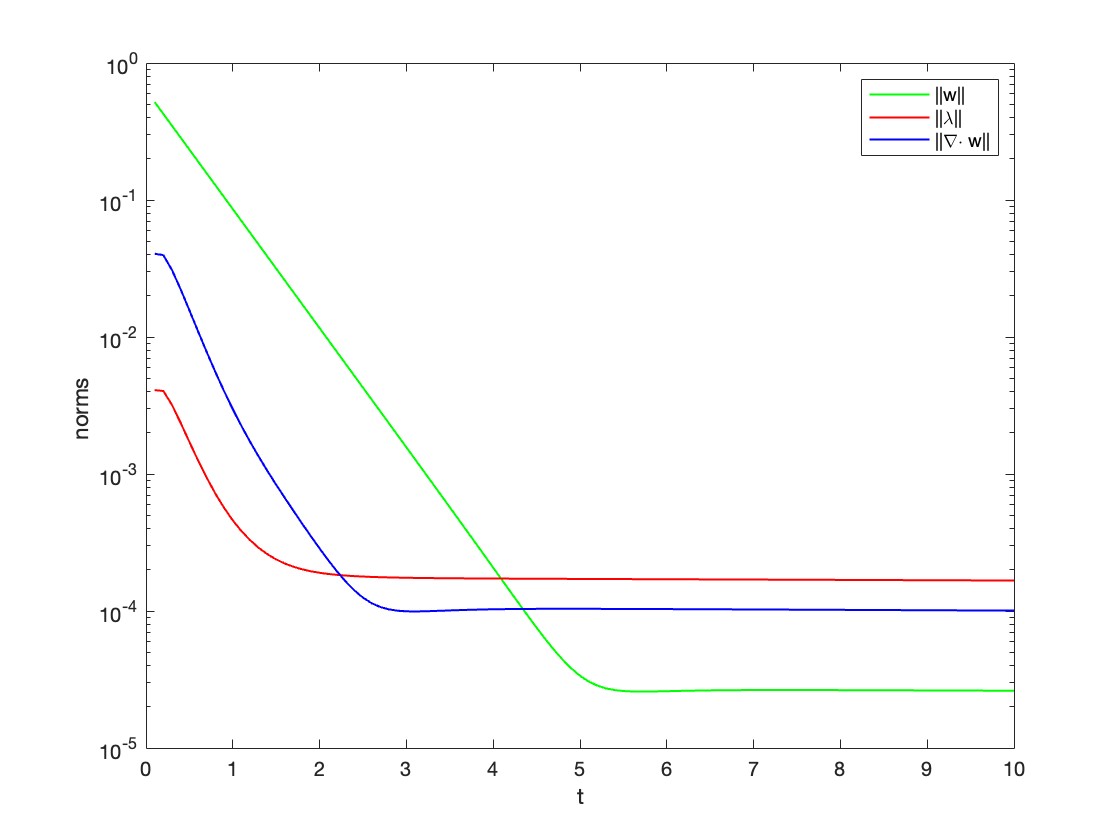}}
\subfloat[$\alpha^2=O(\Dt t ^{-1}),\,\,\beta=O(\Dt t ^{-1})$]{\includegraphics[width=6.5cm]{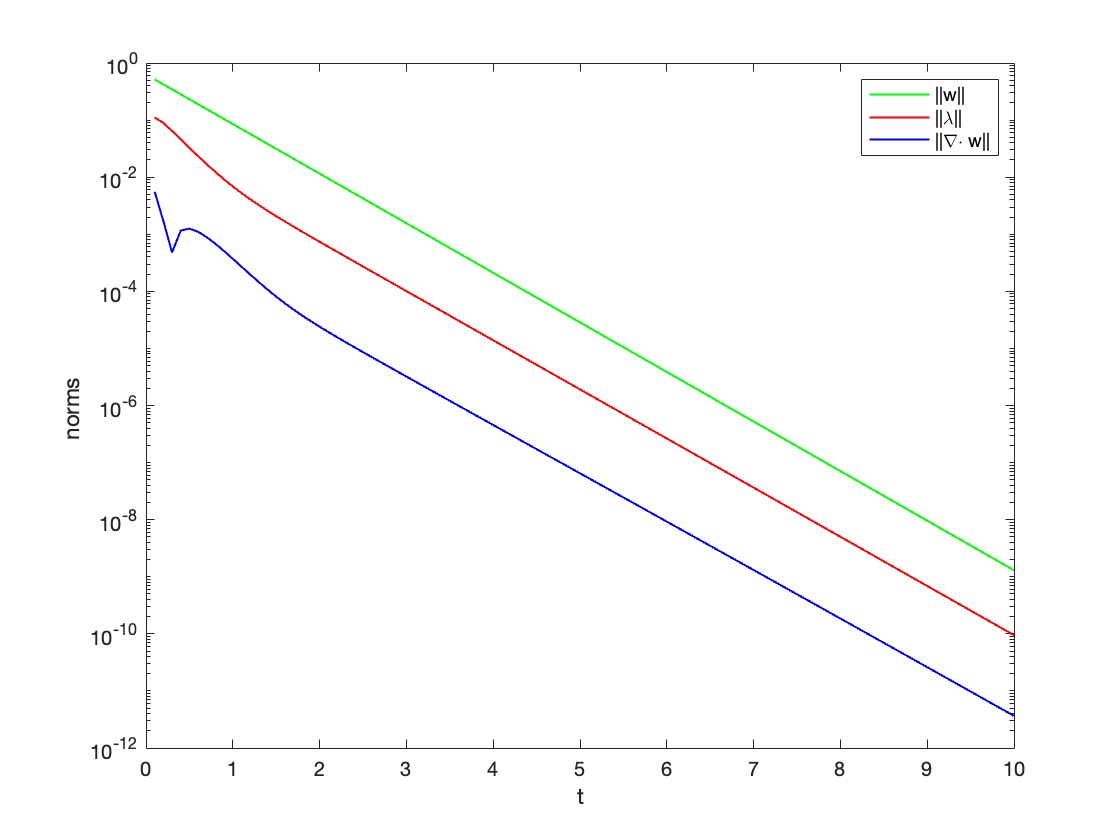}}
\caption{Velocity and pressure norms for different selections of $\alpha$ and $\beta$.}
\label{fig:stab}
\end{figure}
\subsection{Test of Accuracy} In this numerical example, we verify the temporal convergence results obtained theoretically in \Cref{sec:5}. Since a continuous in time analysis was covered therein, we consider a BE time discretization here which is described in \eqref{nse_fem_be13} and \eqref{nse_fem_be23}. The following exact solution for the NSE is considered in the domain $\Omega=(0,1)\times(0,1)$. The velocity and the pressure are taken as:
\begin{align*}
    & u(x,y,t)=e^t(\cos y,\sin x),\\
    &p(x,y,t)=(x-y)(1+t).
\end{align*}
The body force $f(x,t)$ is found by plugging these in the NSE. The other parameters are picked as $\alpha^2=O(\Dt t ^{-1}),\,\,\beta=O(\Dt t ^{-1})$, and $Re=1$. We choose a fine mesh resolution of $ 128 \times 128$ to isolate the spatial contribution of the errors despite they are negligibly small and run the code in time interval $[0,1]$. \par
The measurements were made by norm $L^2([0,T],\Omega)$ which is defined e.g. for $w$ as follows:
$$\| w\|:=\left(\int_0^T\| w(\cdot, t)\|_{L^2(\Og)}^2\ dt\right)^{1/2}.$$
\begin{table}[H]
    \centering
    \begin{tabular}{||c|c|c|c|c|c|c||}
    \hline
         $\Dt t$ & $\|u-w^h\|$ &rate& $\|p-\lambda^h\|$ &rate& $\|\div{w}\|$  \\
         \hline
         0.5&0.00162&-&0.5248&-&1.75e-8\\
         \hline
         0.25&0.00077&1.07&0.0422&1.10&1.06e-8
 \\
         \hline
         0.125&0.00038&1.01&0.0219&1.06&1.09e-8
 \\
         \hline
         0.0625&0.00018&1.07&0.0118&1.06&1.2e-8 
\\
         \hline
         0.03125&9.01e-5&0.99&0.0058&1.02&1.4e-8
\\
         \hline
    \end{tabular}
    \caption{Errors, rates of convergence, and values of $\|\div{w}\|$.}
    \label{tab:accu}
\end{table}
In \Cref{tab:accu}, we observe first order convergence for both velocity and pressure. This is an optimal rate, thanks to the BE time discretization. We also present the values of $\| \div{w}\|$ to have an idea of how small the divergence values are. According to the results of this accuracy test, theoretical expectations are compatible with numerical results. Higher Reynolds numbers and higher order temporal discretizations were also tested, not reported herein and expected rates are obtained. We only keep the current case for the sake of brevity. 
\subsection{Damping of Oscillations}\label{sec:6.2} 
The acoustic equations for the model are based on assuming that $u(x,t)$ is small while the pressure is not. Thus, we start by taking the divergence of the momentum equation and first order time derivative of the continuity equation in \eqref{nse} and then eliminate $\div{w_t}$ to get:
$$\lambda_{tt}-2\beta \Delta \lambda_t-\alpha^2\Delta \lambda=-\div{f}+\div{(w\cdot\grad w+\frac{1}{2}(\div w)w)}.$$
This is the model's Lighthill pressure wave equation. To analyze nonphysical acoustics, we set $\div{f}=0$ and drop the quadratic term $\div{(w\cdot\grad w+\frac{1}{2}(\div w)w)}$. This setting gives us the acoustic equation:
$$\lambda_{tt}-2\beta \Delta \lambda_t-\alpha^2\Delta \lambda=0 .$$  
This is over-damped (hence no oscillations) provided:
\begin{eqnarray*}
\frac{\alpha}{\beta} < \sqrt{\sigma_{min}(-\Delta^h)}
\end{eqnarray*}
 with $\sigma_{min}(-\Delta^h)$ being the smallest eigenvalue of discrete Laplacian. Thus, a selection of $\alpha^2=O(\Dt t ^{-1}),\,\,\beta=O(\Dt t ^{-1})$ which results with $\frac{\alpha}{\beta}=O(\Dt t ^{1/2}) < \sqrt{\sigma_{min}(-\Delta^h)}$ suggests no pressure oscillations in this heuristic analysis.\par
To test the performance of the hybrid algorithm in this paper, we conduct this numerical test and compare the results of the considered scheme with AC and PP methods in terms of reducing the nonphysical oscillations.\par
The domain is a disk that includes another disk as an obstacle inside with a smaller radius and off the center. The diameters and centers are defined by $r_1=1,r_2=0.1,c=(c_1,c_2)=(1/2,0)$. Hence, the overall computational domain becomes
\beas
\Omega=\{(x,y):x^2+y^2< r_1^2 \;\text{and}\; (x-c_1)^2+(y-c_2)^2> r_2^2\}.
\eeas
The computational domain with its triangulation is presented in \Cref{fig:offset}. A counterclockwise rotation drives the flow with the body force given below:
\beas
f(x,y,t)=(-4y(1-x^2-y^2),4x(1-x^2-y^2))^T.
\eeas
\begin{figure}[h!]
    \centering
    \includegraphics[width=0.75\linewidth]{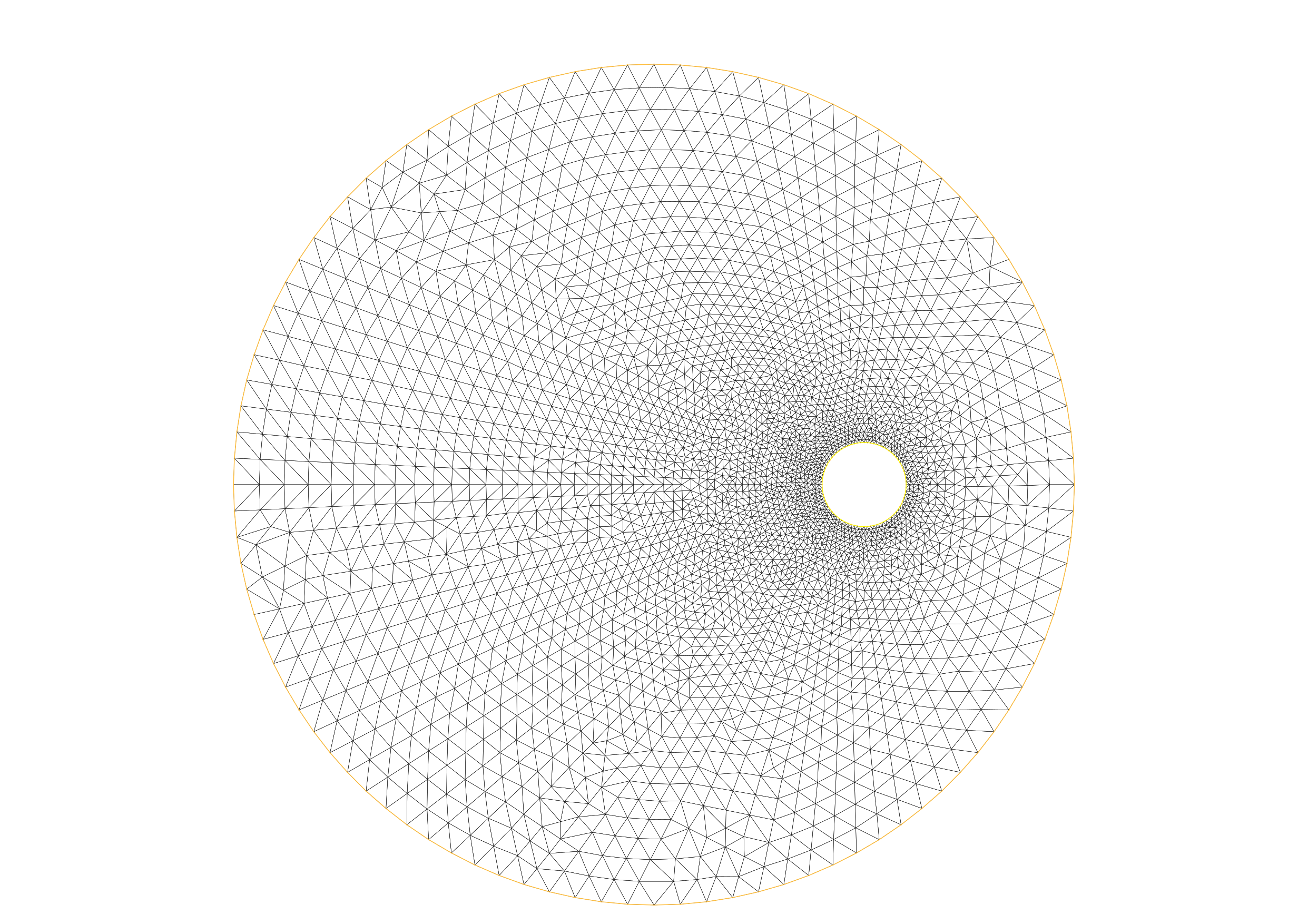}
    \caption{Triangulated computational domain.}
    \label{fig:offset}
\end{figure}
Boundary conditions are taken as no-slip for both circles. $100$ mesh points around the outer circle and $80$ mesh points around the inner circle are taken as mesh resolution. A counterclockwise force propels the flow ($f = 0$ on the outside circle). Thus, the flow revolves around the origin and interacts with the submerged circle. Initial conditions are taken to be the solution of a steady state Stokes problem and we pick $\alpha^2=O(\Dt t ^{-1}),\,\,\beta=O(\Dt t ^{-1})$ for the first order BE method. We also test a BE+time filter method from \cite{guzel22} by adding a time filter. For the second order trapezoidal scheme, this selection alters as $\alpha^2=O(\Dt t ^{-2}),\,\,\beta=O(\Dt t ^{-2})$ to preserve the method's time accuracy. We have $Re=1000$ for this test case. The code is run for 3 different cases of time discretization and details are given for each separately. In each case, the test is carried out in time interval $[0,20]$ with $\Dt t=0.01$.
\subsubsection{Backward Euler}\label{BE}
We first directly implement the scheme with \eqref{nse_fem_be13} and \eqref{nse_fem_be23}, which considers the Backward Euler time discretization (BE). For measuring the oscillations, we calculate the discrete curvature
$$\kappa_{new} = \|\lambda_{new}^{n+1}-2\lambda_{new}^{n}+\lambda_{new}^{n-1}\|$$
where n denotes the time step and $\lambda_{new}=\lambda+2\beta \div{w}$ is a modified pressure. This kind of pressure-like variable was inspired by \cite{linke2011}. Similar results were obtained by testing the pressure discrete curvature. We also give the evolution of $\|\div{w}\|$ to have more clear ideas on the performance of each scheme.
\begin{figure}[h!]
\subfloat[Discrete pressure oscillations]{\includegraphics[width=6.5cm]{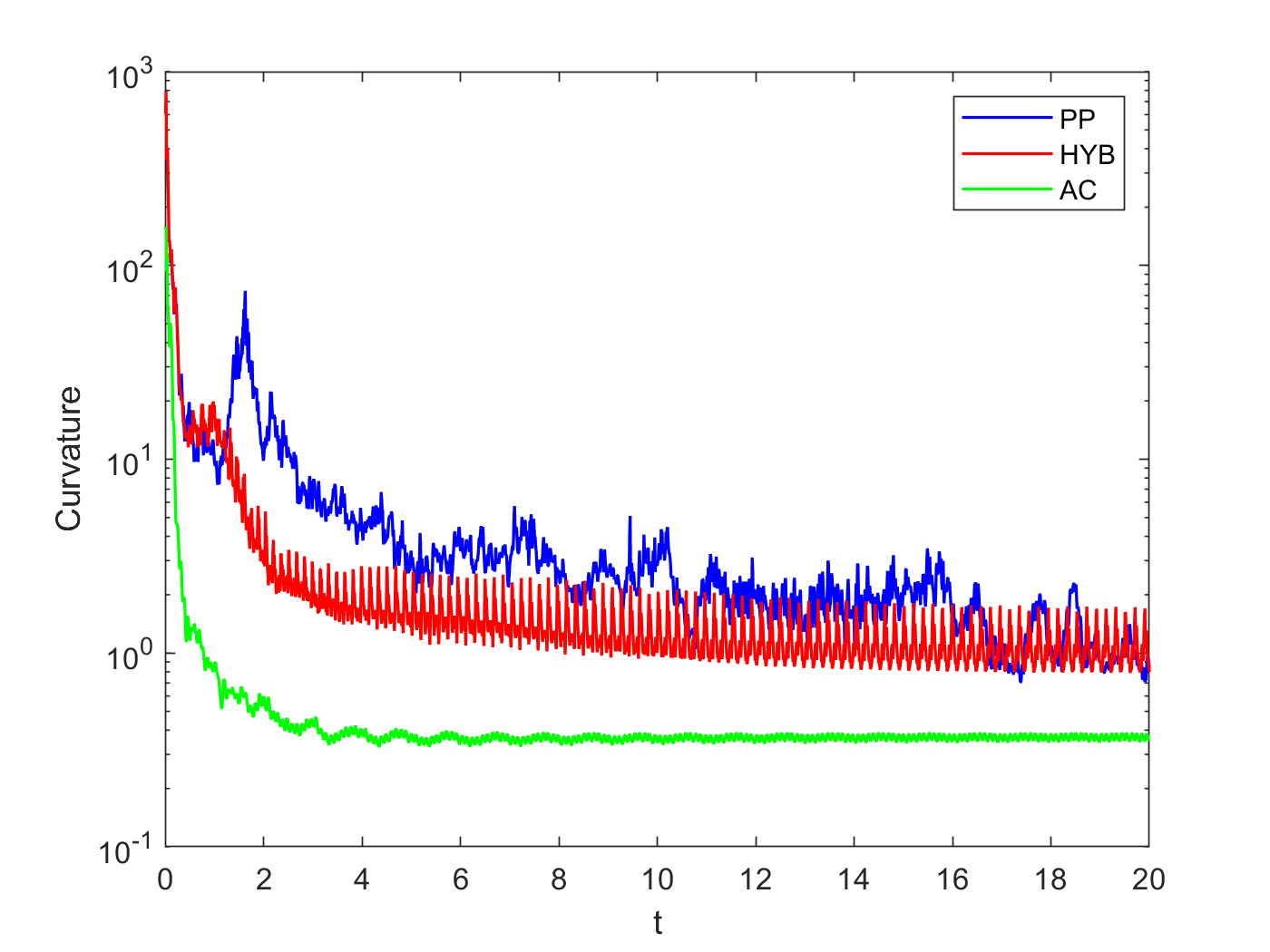}}
\subfloat[Evolution of divergence norm]{\includegraphics[width=6.5cm]{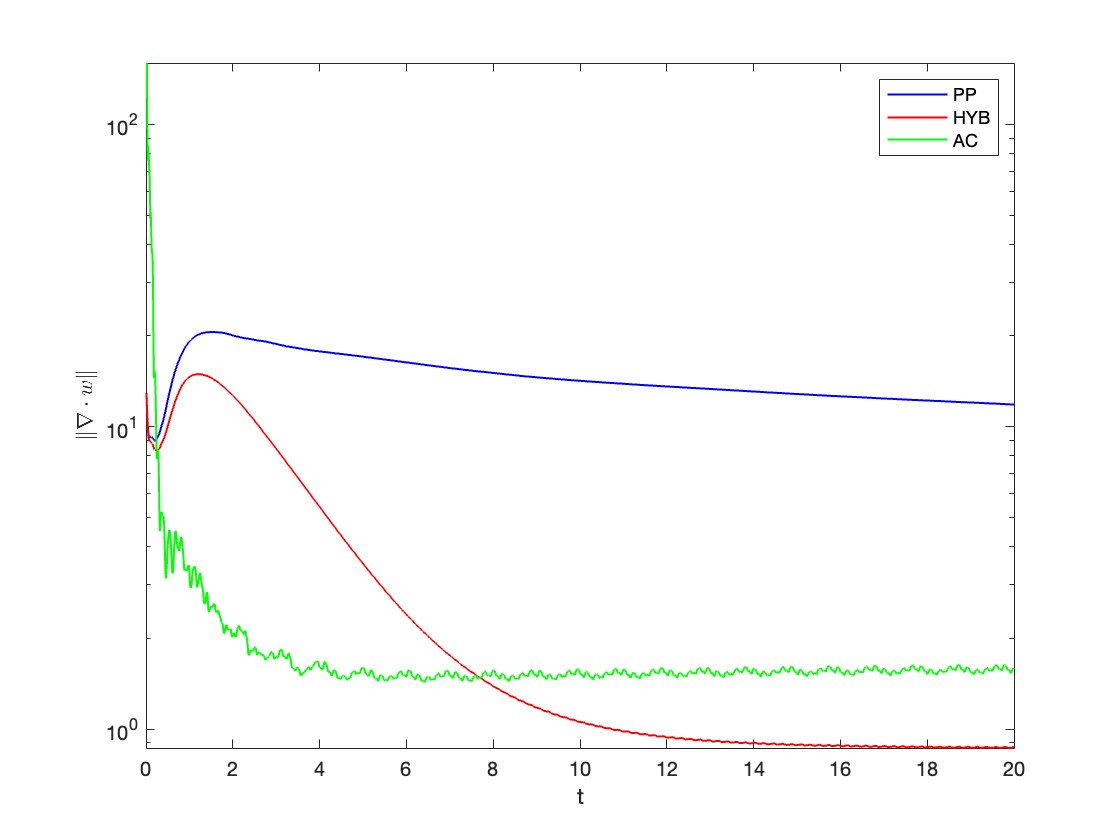}}
\caption{Offset circles test results for BE time discretization.}
\label{fig:BE}
\end{figure}
As one inspects the curvature diagrams in \Cref{fig:BE}, it is clear that the performance of each three schemes in terms of damping non-physical acoustic waves is quite similar. Particularly, the hybrid scheme and the AC method have almost the same effect on pressure oscillations with only a very slight difference in magnitude. The PP method has produced slightly larger  discrete curvature values when compared to others. The divergence norms are also compatible with the oscillation case. The hybrid scheme and the AC method have smaller divergence values when compared with the PP method. Also, after $t=10$ the hybrid scheme displays slightly smaller values than the AC.\par
Since the BE time discretization has a large amount of numerical dissipation, these results are plausibly due to the damping effect of temporal discretization being greater than model influences. We would also notice here that the results obtained here for the AC scheme are completely compatible with the results given in \cite{guzel22}.
\subsubsection{Backward Euler + time filter}\label{BETF}
In this part, we post-process the solution obtained from \eqref{nse_fem_be13} and \eqref{nse_fem_be23} with an active time filter related to the Robert-Asselin filter, Robert \cite{robert69}, Asselin \cite{asse72}. These filters have also been used to suppress nonphysical oscillations in numerical methods \cite{decaria2019}. In each time step, one should only update the solution with
\beas
w^{n+1}\leftarrow w^{n+1}-\frac{\mu}{2}(w^{n+1}-2w^{n}+w^{n-1}),\\
\lambda^{n+1}\leftarrow \lambda^{n+1}-\frac{\mu}{2}(\lambda^{n+1}-2\lambda^{n}+\lambda^{n-1}),
\eeas
where $\mu$ typically is $\mu=0.1$ as suggested in \cite{guzel22}. We verified from our numerical tests that this selection of $\mu$ is better when compared to $\frac{\mu}{2}=1/3$. Thus, the accuracy is boosted by a minimal computational effort. Although \eqref{nse_fem_be13} and \eqref{nse_fem_be23} itself is highly dissipative and dampens the oscillations, we would like to observe the effect of these time filters on different methods.
\begin{figure}[h!]
\subfloat[Discrete pressure oscillations]{\includegraphics[width=6.5cm]{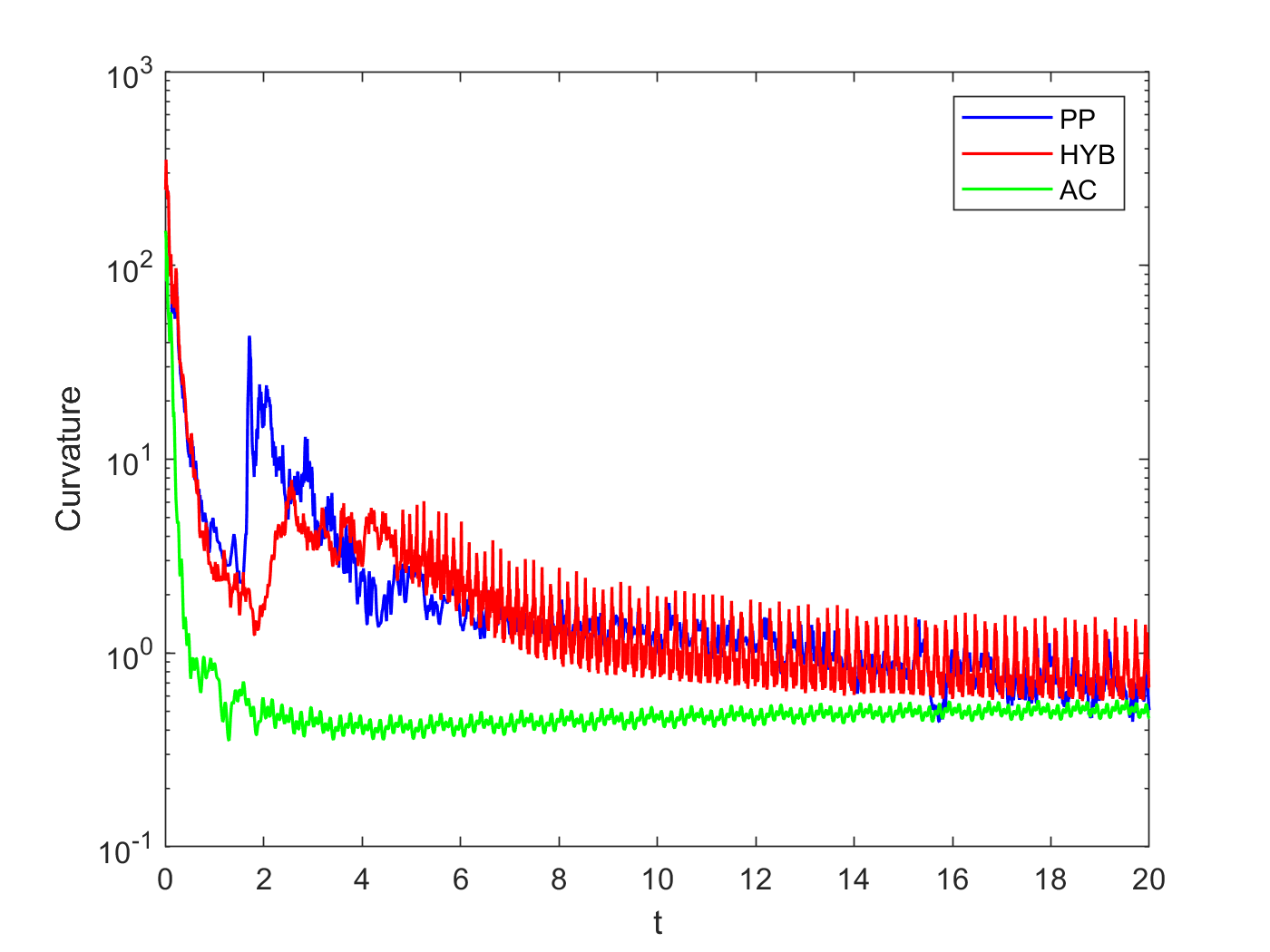}}
\subfloat[Evolution of divergence norm]{\includegraphics[width=6.5cm]{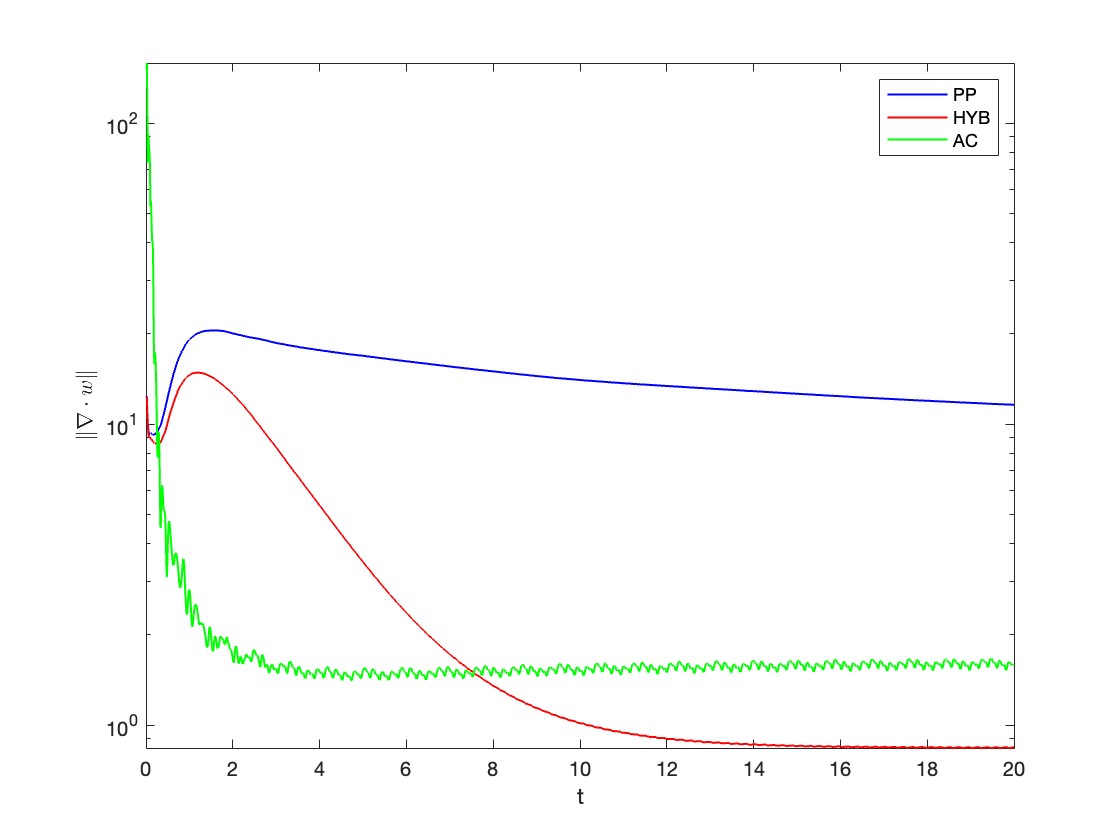}}
\caption{Offset circles test results for BE+time filter.}
\label{fig:BETF}
\end{figure}
We can deduce from \Cref{fig:BETF} that applying the time filters has a minimal effect in terms of damping the oscillations due to numerical dissipation. Furthermore, in terms of divergence, the hybrid method seems to have smaller divergence norms upon the application of time filters. When compared with the BE case, time-filtered solutions have almost half-order smaller divergence norms than the AC method, thanks to the time filters.
\subsubsection{Trapezoidal}\label{sec:CN}
To better understand and compare the oscillation suppressing performance of the method, we conduct the test with a conservative trapezoidal scheme. Thus, we can see the effect of each method on oscillations without damping effects of the numerical dissipation. We modify the hybrid method's algorithm as a second order trapezoidal scheme and interpret the new discretization as follows: Denote  $w^* =\frac{3}{2}w^h_{n}-\frac{1}{2}w^h_{n-1}$ and $w_{n+1/2}^h=\frac{1}{2}w^h_{n}+\frac{1}{2}w^h_{n+1}$. For all $v^h\in X^h,\,\, q^h \in Q^h $
\begin{equation}
\begin{aligned}
\label{algocn}
 &\Big( \frac{ w_{n+1}^h- w_n^h}{k}, v^h\Big)+b^*( w^*, w^h_{n+1/2}, v^h)  +\nu(\grad  w^h_{n+1/2},\grad  v^h)
 + (\grad \lambda^h_{n+1/2}, v^h)
   \\& =(f_{n+1/2}( x), v^h), 
    \\
    &\Big( \frac{ \lambda_{n+1}^h- \lambda_n^h}{k}, q^h\Big)=-\alpha^2 ( \div w^h_{n+1/2},q^h )
    -2\beta \Big( \div (\frac{ w^h_{n+1}-  w^h_{n}}{k}), q^h\Big).
\end{aligned}
\end{equation}
One can decouple \eqref{algocn} as described for \eqref{nse_fem_be13} and \eqref{nse_fem_be23}. The results are presented in \Cref{fig:cn}.
\begin{figure}[H]
\subfloat[Discrete pressure oscillations]{\includegraphics[width=6.5cm]{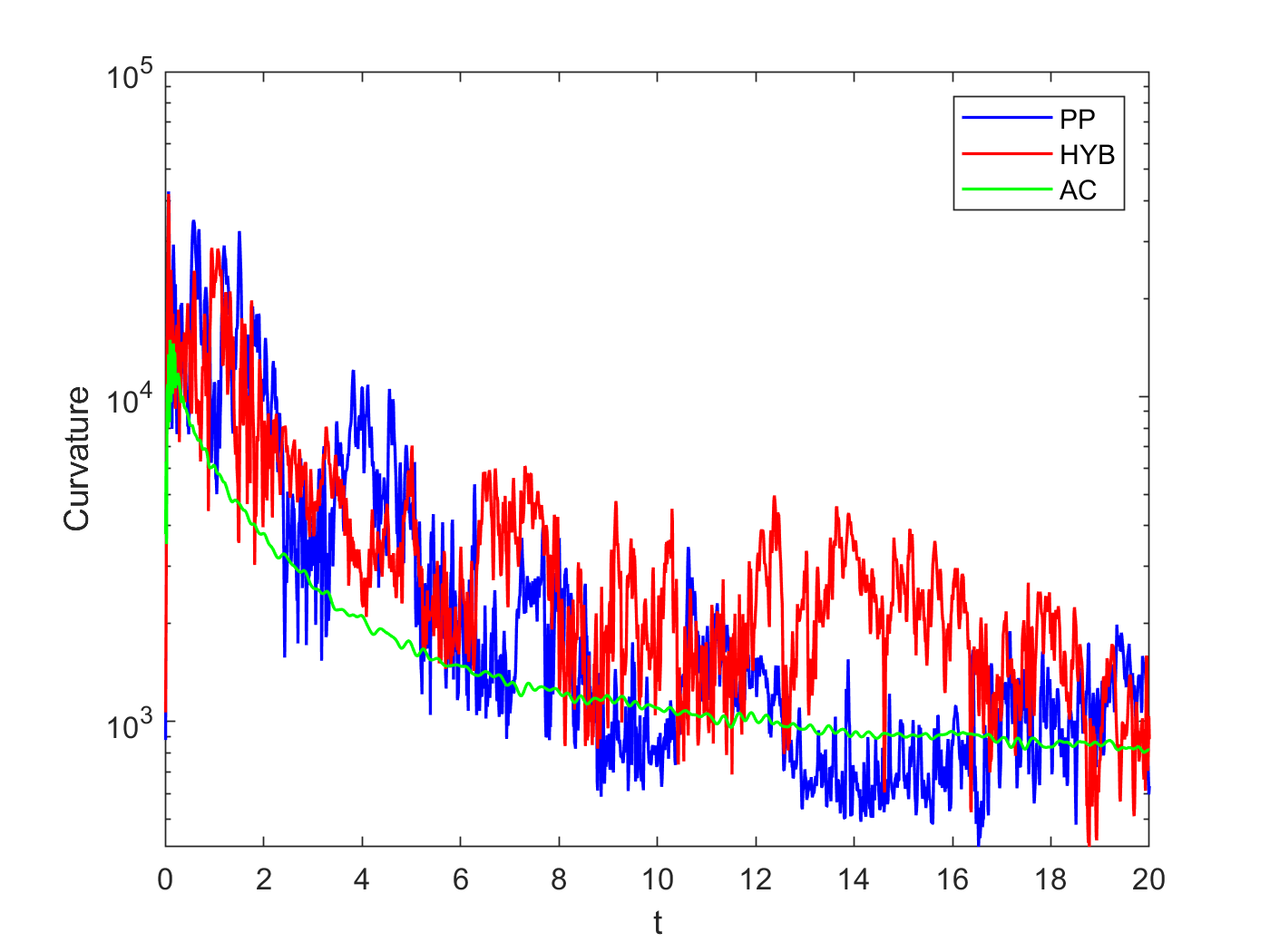}}
\subfloat[Evolution of divergence norm]{\includegraphics[width=6.5cm]{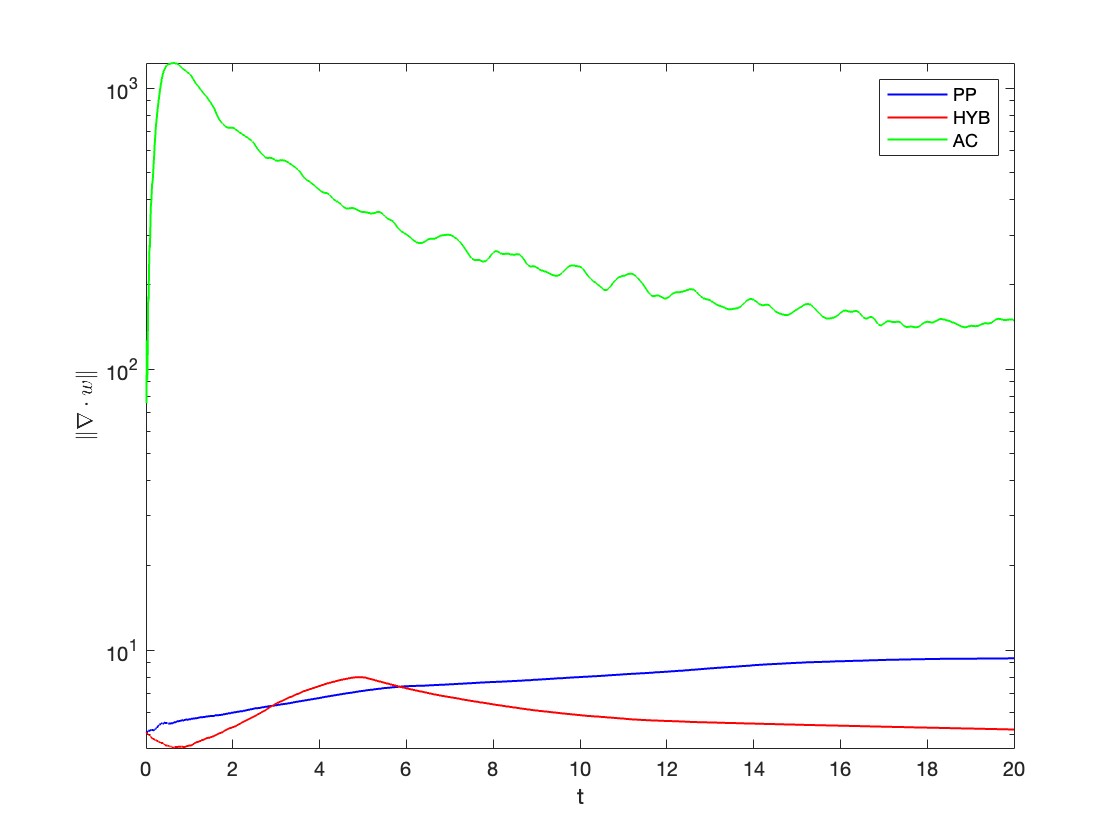}}
\caption{Offset circles test results for trapezoidal discretization.}
\label{fig:cn}
\end{figure}
We observe that utilizing a conservative time discretization results in greater levels of nonphysical acoustics. All three methods perform similarly with a slightly better performance of the penalty projection scheme. In means of divergence norms, the hybrid method and the penalty scheme are almost identical and about two orders smaller than the AC method. 
\subsection{Flow over a full step inside a channel}
In this last numerical example, we consider a benchmark flow over forward-backward facing step which is given and described in \cite{gunz1989}. The domain is a $40 \times 10 $ rectangular channel with a $1 \times 1 $ step into the channel at the lower wall. No slip boundary conditions are applied at the horizontal walls and a parabolic inflow and outflow are applied vertically. The initial velocity, inflow, and outflow profiles are given by:
$$u=(y(10-y)/25,0).$$
The other problem parameters are taken as $Re=600$, $\alpha^2=O(\Dt t ^{-2}),\,\,\beta=O(\Dt t ^{-2})$. We use \eqref{algocn} for this test problem. We simulate for the time interval $[0,40]$ with $\Dt t= 0.01$. The computational domain with triangulation is given in \Cref{fig:chan_dom}.
\begin{figure}[h!]
    \centering
    \includegraphics[width=1.0\linewidth]{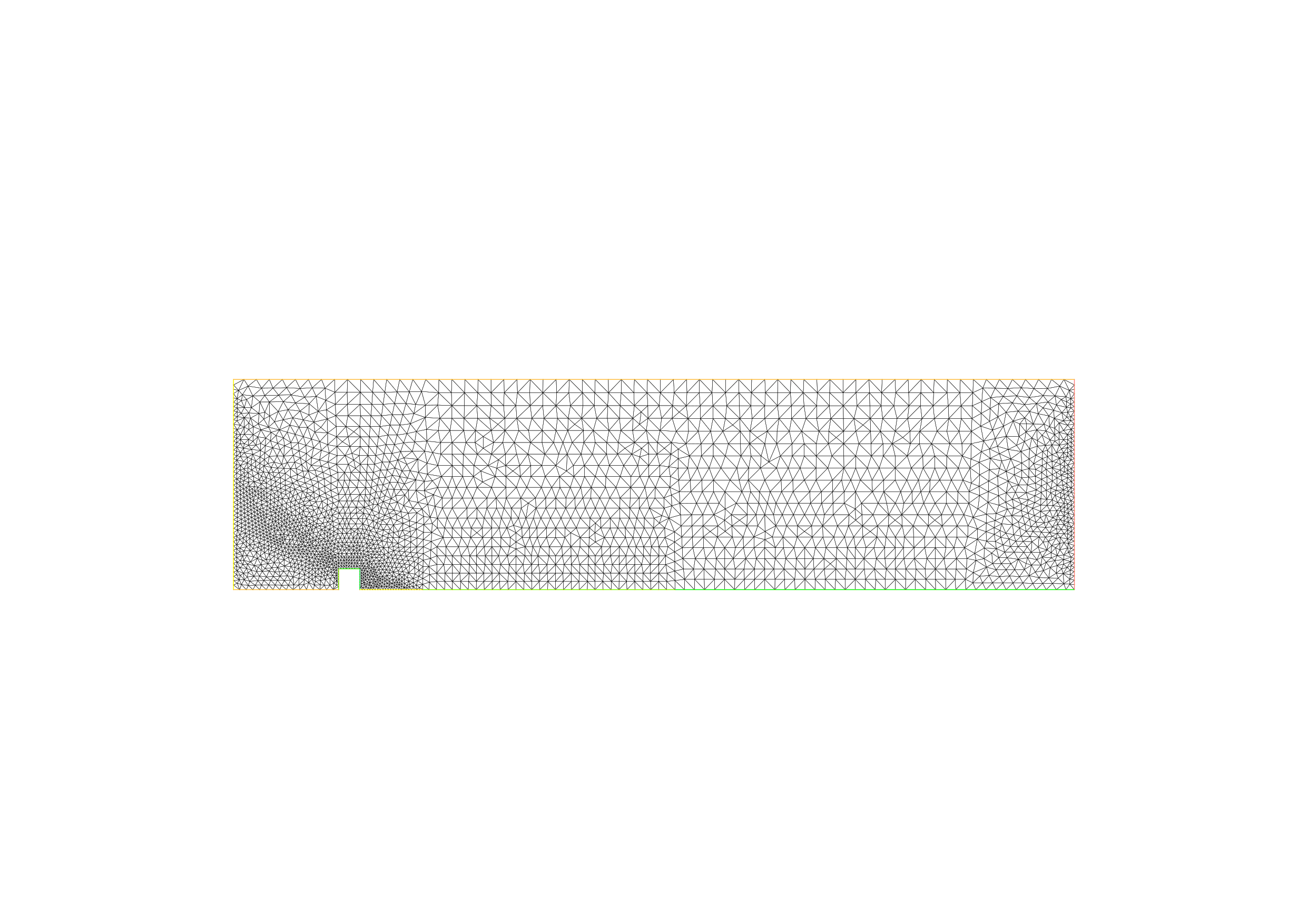}
    \caption{Computational domain and triangulation for the channel flow.}
    \label{fig:chan_dom}
\end{figure}
A total of $ 26931 $ degrees of freedom is provided from the mesh selected here. After the simulation is done, it is expected that the initial parabolic flow profile changes from parabolic to a plug-like behavior, and the shedding of eddies behind the step is decreased.
\begin{figure}[H]
    \centering
    \includegraphics[width=0.8\linewidth]{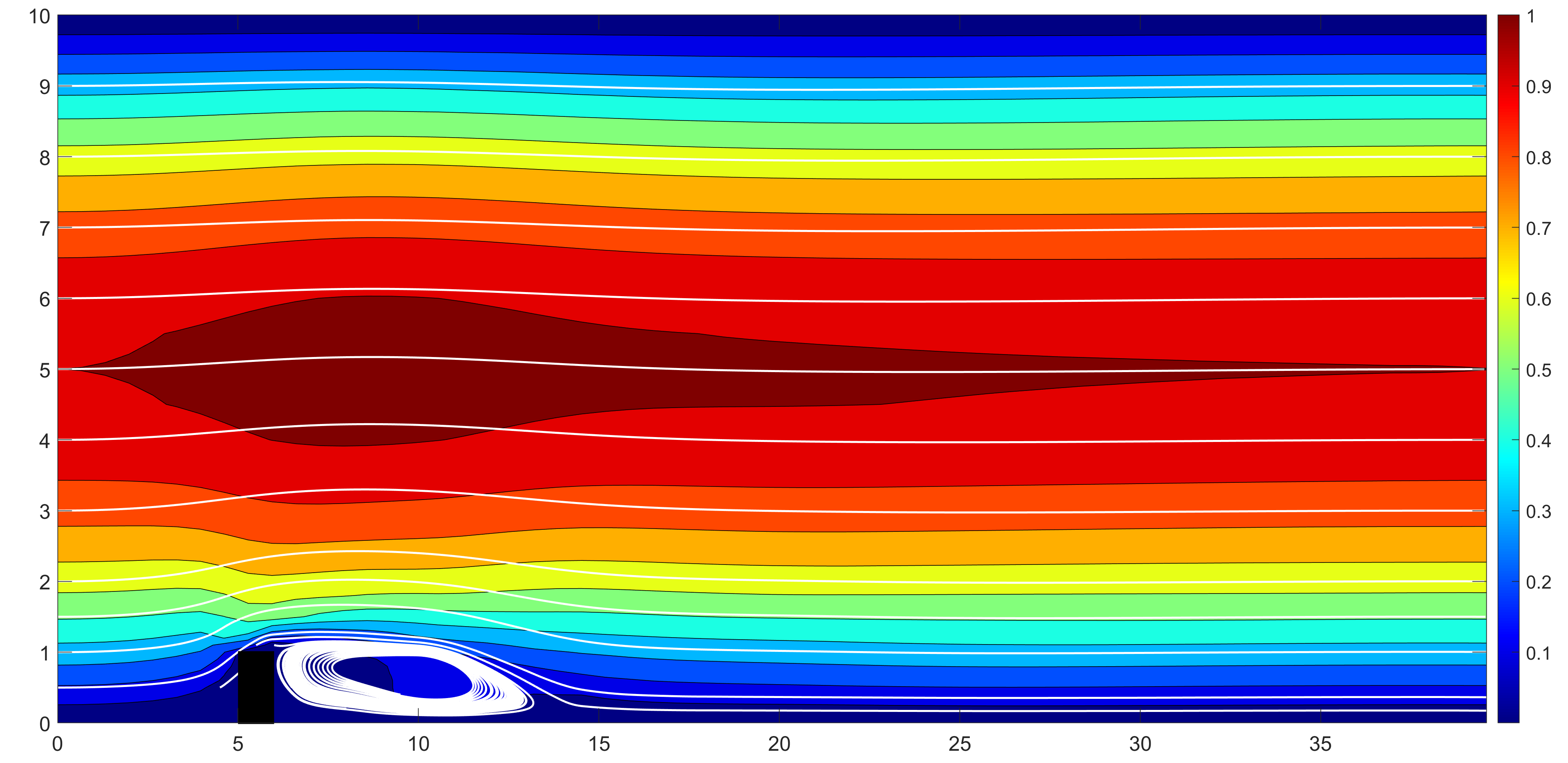}
    \caption{Streamlines over speed contours for the channel flow.}
    \label{fig:chan}
\end{figure}
We can easily observe from \Cref{fig:chan} that, the parabolic inflow has been altered and a large non-separated  eddy has been formed behind the step. These key observations suggest that the scheme considered here has captured the correct flow behavior and gives qualitatively correct results.
\section{Conclusion and future prospects}
A complete finite element numerical analysis is given and supported with extensive numerical examples for the hybrid method. The comparisons suggest that the hybrid scheme performance fits between the pressure penalty and artificial compression schemes. One key point is that the experiments' behavior in damping pressure oscillations is different than predicted by a heuristic analysis of the acoustic equations (This difference is obviously due to the heuristic acoustic analysis neglecting non-negligible effects). The results on damping $\|\div{w}\|$ also differ here (for implicit methods) from those in \cite{ramshaw1991hybrid} (for explicit methods). Explanation of this behavior is an open problem.\par
In the next step, we can consider to state and compare another kind of hybridization of divergence penalizing schemes. 
\section*{Acknowledgement}
 We thank the National Science Foundation (NSF) and The Scientific and Technological Research Council of T\"{u}rkiye (TUBITAK) for their support.
\bibliographystyle{siamplain}
\bibliography{references}
\end{document}